\title[Critical threshold for a traffic flow model]{
  A sharp critical threshold for a traffic flow model with look-ahead dynamics}
\author[Yongki Lee]{Yongki Lee$^\dag$}
\address{$^\dag$Department of Mathematical Sciences, Georgia Southern
  University,  Statesboro, Georgia 30458}
\email{yongkilee@georgiasouthern.edu}
\author[Changhui Tan]{Changhui Tan$^\ddag$}
\address{$^\ddag$Department of Mathematics, University of South
  Carolina, Columbia, South Carolina 29208}
\email{tan@math.sc.edu}
\subjclass[2010]{35L65, 35L67}
\keywords{nonlocal conservation law, traffic flow,
  critical threshold, global regularity, shock formation}
\thanks{\emph{Acknowledgment.} The research of CT is supported by the NSF grant
DMS 1853001. }
\newtheorem{theorem}{Theorem}[section]
\newtheorem{lemma}{Lemma}[section]
\newtheorem{proposition}{Proposition}[section]
\newtheorem{remark}{Remark}[section]
\def\R{\mathbb{R}}
\def\pa{\partial}
\def\ubar{\bar{u}}
\newcommand{\Rom}[1]
    {\text{\MakeUppercase{\romannumeral #1}}}
\begin{document}
\allowdisplaybreaks

\begin{abstract}
  We study a nonlocal traffic flow model with an Arrhenius type
  look-ahead interaction. We show a \emph{sharp} critical threshold
  condition on the initial data which distinguishes the global
  smooth solutions and finite time wave break-down. 
\end{abstract}

\maketitle 

\section{Introduction}
We consider the following one-dimensional traffic flow model with a
nonlocal flux
\begin{equation}\label{traffic_main}
  \begin{cases}
    \partial_t u + \partial_x \big( u(1-u)e^{-\ubar} \big) =0, & t>0, x \in \R, \\
    u(0,x)=u_0 (x), &  x\in \R.
  \end{cases}
\end{equation}
Here, $u(t,x)$ represents the vechicle density normalized in the
interval $[0,1]$. The velocity of the flow $v=(1-u)e^{-\ubar}$ becomes
zero when the maximum density is reached. It is also weighted by a
nonlocal Arrhenius type slow down factor $e^{-\ubar}$, where 
\begin{equation}\label{ubar}
\bar{u}(t,x)=(K*u)(t,x)=\int_{\mathbb{R}} K(x-y)u(t,y) \, dy,
\end{equation}
with appropriately choices of the kernel $K$ to be discussed later.

We are interested in the local and global wellposedness of this
nonlocal macroscopic traffic flow model \eqref{traffic_main}-\eqref{ubar}.
The goal is to understand whether smooth solutions presist in all
time, or there is a finite time singularity formation. Such blowup is
known as the \emph{wave break-down phenomenon}, which discribes the
generation of the traffic jam.

\subsection{Nonlocal conservation laws}
The traffic flow model \eqref{traffic_main} falls into a class of
models in nonlocal scaler conservation laws, which has the form
 \begin{equation}\label{class_main}
 \partial_t u + \partial_x F(u, \ubar)=0,
\end{equation}
where the flux $F$ depends on both the local density $u$, and the
nonlocal quantity $\ubar$ defined in \eqref{ubar}.
This class of models has a variety of applications, not only in
traffic flows
\cite{kurganov2009non,lee2015thresholds,sopasakis2006stochastic}, but
also in dispersive water waves
\cite{constantin1998wave,holm2005class, liu2006wave, whitham2011linear},
the collective motion of biological cells
\cite{burger2008asymptotic, dolak2005keller},
high-frequency waves in relaxing medium
\cite{hunter1990numerical, vakhnenko2002calculation},
the kinematic sedimentation
\cite{betancourt2011nonlocal,kynch1952theory,zumbrun1999nonlocal},
and many more.
The understanding of the wave break-down phenomenon is important and
challenging for these models.

Here are several intriguing models that lie in this class \eqref{class_main}.
\begin{itemize}
  \item The Whitham equation in nonlinear water waves \cite{whitham2011linear}
\[
  \pa_tu + \pa_x\left(\alpha u^2+\ubar\right) =0,
\]
where the kernel has its Fourier transform
$\hat{K}(\xi)=\left(\frac{\tanh\xi}{\xi}\right)^{1/2}$.
Wave break-down has been shown in \cite{hur2017wave}, for initial conditions
which are near break-down.
\item A one-dimensional hyperbolic Keller-Segel model \cite{dolak2005keller}
  \[
    \pa_tu+\pa_x\left(u(1-u)\pa_xS\right)=0,\quad -\pa_{xx}^2S+S=u.
  \]
 It is shown in \cite{lee2015threshold} that wave break-down happens
 for a set of supercritical initial conditions.
\item The one-dimensional aggregation equation
\[
   \pa_tu+\pa_x\left(u\ubar\right)=0,
\]
where the kernel $K=-\pa_x\phi$ for some interaction potential $\phi$.
If $\phi$ is attractive, then the solution is globally regular if and
only if the Osgood condition holds
\cite{bertozzi2009blow,bertozzi2011lp,carrillo2011global}.
There will be finite time density consentration if the condition is
violated. For general attractive-repulsive interaction potential,
there will be no density concentration if the repulsion is strong
enough. However, there might be wave break-down in finite time, see
for instance \cite{tan2017singularity}.

\end{itemize}

The wave break-down phenomenon for general nonlocal conservation laws
\eqref{class_main} has been recently studied in \cite{lee2018wave}. A
sufficient condition on initial data is derived which guarantees a
finite time blowup.

\subsection{Nonlocal traffic models}
We focus on the
nonlocal traffic models \eqref{traffic_main}-\eqref{ubar}.
It is another example of the nonlocal conservation law \eqref{class_main}.

When there is no interaction, namely $K\equiv0$, the dynamics is the
classical Lighthill-Whitham-Richards (LWR) model
\begin{equation}\label{LWR}
  \pa_tu+\pa_x(u(1-u))=0.
\end{equation}
For this local model, it is well-known that there is a finite time
wave break-down for any smooth initial data.

For uniform interaction $K\equiv1$, the nonlocal term
\[\ubar(t,x)=\int_\R u(t,y)dy=\int_\R u_0(y)dy=:m\]
is a constant, due to the conservation of mass. Then, the dynamics
again becomes LWR model, with velocity $v=(1-u)e^{-m}$.

Another class of choices of $K$ is called the \emph{look-ahead}
kernel, where
\[\text{supp}(K)\subseteq(-\infty, 0].\]
Under the assumption, the nonlocal term
\[\ubar(t, x)=\int_x^\infty K(x-y)u(t,y)dy\]
only depends on the density ahead.
Sopasakis and Katsoulakis (SK) in \cite{sopasakis2006stochastic} introduce
a celebrated traffic model with Arrhenius type look-ahead interactions,
where
\begin{equation}\label{SK}
  K(x)=\begin{cases}
    1&-1<x<0,\\0&\text{otherwise}.
  \end{cases}
\end{equation}
A family of kernel with look-ahead distance $L$ can be generated by
the scaling
\begin{equation}\label{scaling}
  K_L(x)=K\left(\frac{x}{L}\right).
\end{equation}
Note that when taking $L\to0$, the system reduces to the local LWR
model \eqref{LWR}.

The wave break-down phenomenon for the SK model is observed in
\cite{kurganov2009non}, through an extensive
numerical study. A different class of linear look-ahead kernel is also
introduced, with
\begin{equation}\label{linear}
  K(x)=\begin{cases}
    2\big(1-(-x)\big)&-1<x<0,\\0&\text{otherwise}.
  \end{cases}
\end{equation}
Numerical examples suggest that wave break-down happens in finite
time, for a class of initial data. However, unlike the LWR model, it
is generally unclear for the nonlocal models whether wave break-down
happens for all smooth initial data. 

\subsection{Critical threshold and wave break-down}
In many examples above, whether there is a finite time wave break-down
depends on the choice of initial conditions: subcritical initial data
lead to global smooth solution, while supercritical initial data
lead to a finite time wave break-down. 
This is known as the \emph{critical threshold phenomenon}, which has
been studied in the context of Eulerian dynamics, including the
Euler-Poisson equations
\cite{engelberg2001critical,lee2013thresholds,tadmor2003critical},
the Euler-Alignment equations
\cite{carrillo2016critical,tadmor2014critical,tan2019euler}, and more
systems of conservation laws.

A critical threshold is called \emph{sharp} if all initial data lie in
either the subcritical region, or the supercritical region.

For the traffic model \eqref{traffic_main} with nonlocal look-ahead
interactions \eqref{SK} or \eqref{linear}, a supercritical region has
been obtained in \cite{lee2015thresholds}. which leads to a finite
time wave break-down. However, the result is not sharp. In particular,
a challenging open question is, whether there exists subcritical initial
data, such that the solution is globally regular.  

\subsection{Main result}
We study the traffic flow model \eqref{traffic_main} with the
following look-ahead interaction
\begin{equation}\label{kernel}
  K(x)=\begin{cases}
    1&-\infty<x<0,\\0&\text{otherwise}.
  \end{cases}
\end{equation}
The kernel can be viewed as a  limit of the SK model \eqref{SK} under
scaling \eqref{scaling}, with look-ahead distance $L\to\infty$.

The corresponding nonlocal term is given by
\begin{equation}\label{ourubar}
 \ubar(t,x)=\int_x^\infty u(t,y)dy.
\end{equation}

 The main result is stated as follow:

\begin{theorem}[Sharp critical threshold]\label{thm:main}
Consider the traffic flow model \eqref{traffic_main} with
a nonlocal look-ahead kernel \eqref{ourubar}.
Suppose the initial data is smooth, with $u_0\in L^1\cap H^s(\R)$ for
$s>3/2$, and $0\leq u_0\leq1$. 
Let $\sigma$ be a function defined in \eqref{eq:sigma}.
Then,
\begin{itemize}
  \item If the initial data is \textbf{subcritical}, satisfying
    \begin{equation}\label{sub}
      u_0'(x)\leq\sigma(u_0(x)),\quad \forall~x\in\R,
    \end{equation}
    then the  solution exists globally in time. Namely, for any $T>0$, there
    exists a
    unique solution $u\in C([0,T]; L^1\cap H^s(\R))$.
  \item If the initial data is \textbf{supercritical}, satisfying
    \begin{equation}\label{super}
      \exists~x_0\in\R\quad s.t.\quad u_0'(x_0)>\sigma(u_0(x_0)),
    \end{equation}
    then the solution must blow up in finite time. More precisely,
    there exists a finite time $T_*>0$, such that
    \[\limsup_{t\to T_*}\|\pa_xu(t,\cdot)\|_{L^\infty}=+\infty.\]
\end{itemize}
\end{theorem}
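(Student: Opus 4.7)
The plan is to analyze the evolution of $u$ and $d := \partial_x u$ along characteristics of \eqref{traffic_main}, reduce the dynamics to an autonomous planar ODE system via a time rescaling, and identify $\sigma$ as an invariant Riccati solution that separates bounded from blow-up behavior. I would first run a standard contraction argument in $L^1 \cap H^s$: convolution against the bounded kernel \eqref{kernel} is a smoothing operation and loses no derivative, producing a unique maximal solution $u \in C([0,T_*); L^1 \cap H^s)$ together with a Beale--Kato--Majda type continuation criterion, namely persistence as long as $\|\partial_x u\|_{L^\infty}$ is finite. Along the characteristic ODE $\dot X = (1-2u)\,e^{-\bar{u}}$, the transport form of \eqref{traffic_main} reads $\dot u = -u^2(1-u)\,e^{-\bar{u}}$, which propagates $u \in [0,1]$ (the right side vanishes at both endpoints), and conservation of $\|u\|_{L^1}$ then yields the uniform bound $e^{-M_1} \leq w := e^{-\bar{u}} \leq 1$, where $M_1 := \|u_0\|_{L^1}$.

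Using $\partial_x \bar{u} = -u$ and hence $\partial_x w = uw$, I would differentiate \eqref{traffic_main} once in $x$ to obtain, along the same characteristic,
\begin{align}
\dot u &= -u^2(1-u)\,w, \\
\dot d &= w\bigl[\,2d^2 - u(3-5u)\,d - u^3(1-u)\,\bigr].
\end{align}
The common factor $w>0$ on both right-hand sides invites the rescaling $d\tau = w\,dt$, which turns the system into an \emph{autonomous} planar flow in $(u,d)$. A direct check shows that $\sigma(u) = u(1-u)$ solves the reduced Riccati ODE for $d$ as a function of $u$, so the graph $d = \sigma(u)$ is invariant under the flow. Setting $z := d - \sigma(u)$ produces the clean factorized identity
\[
\frac{dz}{d\tau} \;=\; z\bigl(\,2z + u(1+u)\,\bigr),
\]
which drives both halves of the dichotomy.

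For the subcritical case \eqref{sub}, $z(0) \leq 0$ holds on every characteristic; since $z \equiv 0$ is itself a solution, ODE uniqueness preserves $z(\tau) \leq 0$, giving $d \leq \sigma(u) \leq 1/4$. For the matching lower bound I would verify that, for any $M \geq 2$, the set $\{d \geq -M\}$ is invariant by checking $\dot d > 0$ on its boundary using only $u \in [0,1]$ and $0 < w \leq 1$; this yields $d \geq -\max\{2,\,-\inf_x u_0'(x)\}$, which is finite since $H^s(\R) \hookrightarrow C^1(\R)$ boundedly for $s > 3/2$. Combined with the continuation criterion, this upgrades local to global existence. For the supercritical case \eqref{super}, along the characteristic issued from $x_0$ one has $z(0) > 0$ and necessarily $u(0) > 0$ (otherwise $u_0'(x_0) = 0$ by the minimum principle, contradicting \eqref{super}); the identity above then gives $dz/d\tau \geq 2z^2$, and Riccati comparison forces $z(\tau) \to +\infty$ at some $\tau_* \leq 1/(2z(0))$. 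Since $e^{-M_1}\,dt \leq d\tau \leq dt$, this translates into a finite blow-up time $t_* \leq e^{M_1}\tau_*$ for $\|\partial_x u(t,\cdot)\|_{L^\infty}$.

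The hard part is uncovering the sharp threshold $\sigma(u) = u(1-u)$. The reduced Riccati admits the two polynomial solutions $u(1-u)$ and $-u^2$, and only the upper one both factorizes $dz/d\tau$ into a form that forces Riccati blow-up on the supercritical side without any smallness assumption, and simultaneously serves as an invariant graph bounding $d$ from above on the subcritical side. A secondary technical point is the lower bound on $d$ in the subcritical regime, which would not close without the uniform $L^1$ bound on $\bar{u}$ that keeps $w$ away from zero and justifies the time rescaling.
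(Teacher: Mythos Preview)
Your approach is correct and in fact sharper than the paper's. The paper defines the threshold function $\sigma$ only implicitly through the singular ODE \eqref{eq:sigma}, whereas you observe that $\sigma(u)=u(1-u)$ is an explicit solution (one checks directly that it satisfies \eqref{eq:sigma} with $\sigma(0)=0$, $\sigma'(0)=1$, and $\sigma(1)=0$, all of which the paper uses). This yields your clean factorization $dz/d\tau = z\bigl(2z+u(1+u)\bigr)$ for $z=d-u(1-u)$, which the paper does not have. The subcritical half of your argument is essentially the paper's Propositions~3.1--3.2 (invariance of $\{z\le 0\}$ by ODE uniqueness, plus a lower barrier for $d$). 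The supercritical half is where the two proofs diverge substantially: the paper proceeds in three stages (Propositions~3.3--3.4 and a separate trajectory estimate), first showing that $u$ decays below any level $u_1>0$ in finite time, then that $d$ is uniformly bounded below by a positive constant $C_*$ along the trajectory (this requires a bootstrapped estimate using $\sigma(u)\ge \tfrac34 u$ near $u=0$), and finally that for $u$ small the $d$-dynamics is close enough to the pure Riccati $\dot d=2d^2 e^{-\bar u}$ to force blow-up. Your single inequality $dz/d\tau\ge 2z^2$ (valid since $u(1+u)\ge 0$ on $[0,1]$) replaces all of this and, together with $e^{-m}\,dt\le d\tau$, gives an explicit bound $T_*\le e^m/\bigl(2(d_0-\sigma(u_0))\bigr)$. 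The paper's more indirect route has the residual merit of being a template for kernels where $\sigma$ is not available in closed form, but for this model your argument is shorter and yields more quantitative information.
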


\begin{remark}
To the best of our knowledge, this is the first result for the
nonlocal traffic models where wave break-down does not happen for a
class of subcritical initial data. 

An example of subcritical initial data is given in Section
\ref{sec:expsub}. Global regularity is verified through numerical
simulation. A striking discovery is, with this initial condition, 
finite time wave break-downs are observed both the LWR model
and the SK model. This indicates a unique feature of the kernel
\eqref{ourubar}.
\end{remark}

\begin{remark}
The critical threshold result in Theorem \ref{thm:main} is sharp. For
nonlocal conservation laws, sharp results are usually hard to obtain,
due to the presence of nonlocality. We utilize a special structure
of the kernel \eqref{ourubar} to obtain a sharp threshold,
$\pa_x\ubar=-u$. So, this kernel is in some sense more ``local''.
Possible extensions for more general kernels will be discussed in
Section \ref{sec:close}.
\end{remark}

The rest of the paper is organized as follows. In Section
\ref{sec:local}, we establish the local wellposedness theory for our
nonlocal traffic model \eqref{traffic_main} with \eqref{ourubar}, as well as a criterion to
preserve smooth solutions.
In Section \ref{sec:threshold}, we show the sharp critical threshold,
and prove Theorem \ref{thm:main}.
Some numerical examples are provided in Section \ref{sec:exp}, which
illustrate the behaviors of the solution under subcritical and
supercritical initial data. Finally, we make some remarks in Section
\ref{sec:close}, which would lead to future investigations.

\section{Local wellposedness and regularity criterion}\label{sec:local}
In this section, we establish the local wellposedness theory for our
main system \eqref{traffic_main}.

\begin{theorem}[Local wellposedness]\label{thm:local}
Let $s>3/2$. Consider equation \eqref{traffic_main} with
\eqref{ourubar}. Suppose the initial data $u_0\in L^1\cap H^s(\R)$, and
$0\leq u_0\leq1$. Then, 
there exists a time $T_*=T_*(u_0)>0$, such that the solution $u(t,x)$
exists in $L^\infty([0,T]; L^1\cap H^s(\R))$.

Moreover, for any time $T>0$, the solution exists in $L^\infty([0,T];
L^1\cap H^s(\R))$ if and only if
\begin{equation}\label{BKM}
\int_0^T\|\pa_xu(\cdot,t)\|_{L^\infty}dt<+\infty.
\end{equation}
\end{theorem}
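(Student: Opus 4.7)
The plan is to exploit the special local structure of the kernel, namely the identity $\partial_x \bar u = -u$, to recast \eqref{traffic_main} as a quasilinear transport equation with a nonlinear source:
\begin{equation}
  \pa_t u + (1-2u)e^{-\ubar}\,\pa_x u = -u^2(1-u)e^{-\ubar}.
\end{equation}
From this form, two fundamental a priori controls fall out immediately. First, along characteristics $\dot x = (1-2u)e^{-\ubar}$, one has $\dot u = -u^2(1-u)e^{-\ubar}$, which vanishes at $u=0$ and $u=1$, so the maximum principle preserves $0 \le u(t,\cdot)\le 1$. Second, integrating the conservation law in $x$ yields mass conservation $\|u(t,\cdot)\|_{L^1} = \|u_0\|_{L^1}$, and hence the uniform bound $0\le\ubar(t,x)\le\|u_0\|_{L^1}$. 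These supply an all-time bound for the coefficients $(1-2u)e^{-\ubar}$ and the source, depending only on the initial data.

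Next, I would run a standard Kato-type $H^s$ energy estimate. Applying $\Lambda^s = (1-\pa_{xx})^{s/2}$ to the equation, pairing with $\Lambda^s u$ in $L^2$, and integrating by parts in the transport term produces two contributions: a boundary-type piece controlled by $\|\pa_x[(1-2u)e^{-\ubar}]\|_{L^\infty}$, and a commutator $\langle[\Lambda^s,(1-2u)e^{-\ubar}]\pa_xu,\Lambda^s u\rangle$ to which I apply the Kato--Ponce inequality. Since $\pa_x\ubar=-u$ is already controlled by $\|u\|_{L^\infty}\le 1$, all coefficient derivatives reduce to quantities depending on $\|u\|_{L^\infty}$, $\|\ubar\|_{L^\infty}$, and $\|\pa_xu\|_{L^\infty}$. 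The source term is handled by the Moser-type estimate $\|\Lambda^s(u^2(1-u)e^{-\ubar})\|_{L^2}\lesssim C(\|u\|_{L^\infty},\|\ubar\|_{L^\infty})\|u\|_{H^s}$. Collecting everything yields
\begin{equation}
  \frac{d}{dt}\|u\|_{H^s}^2 \;\le\; C(\|u_0\|_{L^1})\bigl(1+\|\pa_xu\|_{L^\infty}\bigr)\|u\|_{H^s}^2.
\end{equation}

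For local existence, I would construct approximate solutions $u^\eps$ either by Friedrichs mollification of the nonlinearity or by a vanishing-viscosity regularization $\pa_tu^\eps+\pa_x(\cdots)=\eps\pa_{xx}^2u^\eps$; both schemes preserve the $L^\infty$ bound and the $L^1$ mass, and enjoy the same $H^s$ estimate uniformly in $\eps$. Using Sobolev embedding $H^s\hookrightarrow W^{1,\infty}$ for $s>3/2$ to absorb $\|\pa_xu\|_{L^\infty}\lesssim\|u\|_{H^s}$, a Gronwall argument furnishes a uniform time interval $[0,T_*]$ on which $\|u^\eps\|_{H^s}$ stays bounded. Standard compactness passes to the limit; uniqueness is proved by an $L^2$ estimate on the difference $w=u_1-u_2$ of two solutions, noting that differences of the coefficients and of $\bar u$ are both controlled by $\|w\|_{L^2}$ through Young's inequality for convolution with the bounded kernel \eqref{ourubar}.

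For the continuation criterion \eqref{BKM}, I would avoid invoking Sobolev embedding on $\|\pa_xu\|_{L^\infty}$ and instead apply Gronwall directly to the displayed differential inequality, yielding
\begin{equation}
  \|u(t)\|_{H^s}^2 \;\le\; \|u_0\|_{H^s}^2\exp\!\left(C\int_0^t\bigl(1+\|\pa_xu(\cdot,\tau)\|_{L^\infty}\bigr)d\tau\right).
\end{equation}
Thus, if the integral in \eqref{BKM} is finite, $\|u\|_{H^s}$ remains bounded up to $T$, and the local existence theorem can be reapplied from time $T$ to extend the solution beyond; the converse is immediate from $H^s\hookrightarrow W^{1,\infty}$. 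The main technical point to watch is the commutator estimate in the transport term, since every other nonlocal effect has been collapsed by the identity $\pa_x\ubar = -u$; this should be routine via Kato--Ponce, but it is where the $s>3/2$ threshold enters.
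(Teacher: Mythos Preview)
Your proposal is correct and follows essentially the same route as the paper: rewrite via $\pa_x\ubar=-u$ into the quasilinear transport form, establish the maximum principle and mass conservation to get uniform $L^\infty$ control on the coefficients, then run an $H^s$ energy estimate using Kato--Ponce for the commutator and a Moser-type composition bound for $e^{-\ubar}$, arriving at $\frac{d}{dt}\|u\|_{H^s}^2\lesssim(1+\|\pa_xu\|_{L^\infty})\|u\|_{H^s}^2$ and the continuation criterion by Gronwall. One small caution: in your uniqueness sketch, Young's inequality does not directly bound $\|\bar w\|$ by $\|w\|_{L^2}$ since $K=1_{(-\infty,0]}\notin L^p$ for any finite $p$; you would instead use $\|\bar w\|_{L^\infty}\le\|w\|_{L^1}$ and close in $L^1\cap L^2$, but this is a routine fix and the paper itself does not spell out uniqueness.
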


\subsection{Conservation of mass}
Assume $u$ vanishes at infinity.
Integrating \eqref{traffic_main} in $x$, we obtain 
\[\frac{d}{dt}\int_\R u(t,x)dx=-\int_\R\pa_x(u(1-u)e^{-\ubar})dx=0.\]
Therefore, the total mass
\[m:=\int_\R u(t,x)dx\]
is conserved in time. From \eqref{ourubar}, we get the following a
priori bound on $\ubar$
\begin{equation}\label{eq:ubarbound}
0 \leq \ubar(t,x)\leq m.
\end{equation}

\subsection{Maximum principle}
We next show that there is a maximum density for our traffic model.
Rewrite  \eqref{traffic_main} as
\begin{equation}\label{eq:main}
\pa_tu+(1-2u)e^{-\ubar}\pa_xu+u^2(1-u)e^{-\ubar}=0.
\end{equation}
Let $X(t)=X(t;x)$ be the characterstic path originated at $x$, defined as
\[\frac{d}{dt}X(t;x)=(1-2u(t,X(t;x)))e^{-\ubar(t,X(t;x))},\quad X(0;x)=x.\]
Then, along each  characterstic path
\begin{equation}\label{eq:uchar}
  \frac{d}{dt}u(t, X(t))=-u^2(1-u)e^{-\ubar},
\end{equation}
where the right hand side is evaluated at $(t, X(t))$.

The following maximum principle holds.
\begin{proposition}[Maximum principle]
Let $u$ be a classical solution of \eqref{eq:main}, with
initial condition $0\leq u_0\leq 1$. Then, $0\leq u(x,t)\leq1$ for any
$x\in\R$ and $t\geq0$.
\end{proposition}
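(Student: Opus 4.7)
The plan is to argue entirely along characteristics, using the scalar ODE \eqref{eq:uchar} for $u$ along $X(t;x)$. Fix $x\in\R$ and set $w(t):=u(t,X(t;x))$, so that $w$ solves $\dot w = g(w,t)$ with $g(w,t):=-w^2(1-w)e^{-\ubar(t,X(t;x))}$ and $w(0)=u_0(x)$. Since $u$ is a classical solution, the factor $e^{-\ubar(t,X(t;x))}$ is a fixed continuous function of $t$ bounded in $[e^{-m},1]$ by \eqref{eq:ubarbound}, and $g$ is a cubic polynomial in $w$; in particular $g$ is locally Lipschitz in $w$, uniformly on compact time intervals.

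The key observation is that $g(0,t)=g(1,t)=0$ for every $t$, so both constants $w\equiv 0$ and $w\equiv 1$ are solutions of the scalar ODE. Standard uniqueness for scalar ODEs with locally Lipschitz right-hand side then forbids distinct trajectories from coinciding, so a trajectory starting in $[0,1]$ can neither touch nor cross these two equilibria. Consequently, $u_0(x)\in[0,1]$ forces $w(t)\in[0,1]$ throughout the interval of existence.

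To pass from a statement along characteristics to a pointwise bound in $(t,x)$, it suffices to note that for a classical solution the velocity field $(1-2u)e^{-\ubar}$ is $C^1$ in $x$ and continuous in $t$, so the characteristic ODE admits a well-defined $C^1$ flow with a global inverse on the existence interval. Every $(t,x)$ is therefore of the form $(t,X(t;x_0))$ for a unique $x_0\in\R$, and applying the previous step at $x_0$ gives $0\le u(t,x)\le 1$.

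The analysis presents no serious difficulty; the only point requiring care is the invertibility of the characteristic flow, which is secured by the $H^s$ regularity with $s>3/2$ provided by Theorem \ref{thm:local} together with the a priori bound \eqref{eq:ubarbound}. As an alternative, one could argue by direct contradiction at a first crossing time of either level $0$ or $1$ and check the sign of $g$ there, but the ODE-uniqueness viewpoint is cleaner and avoids separate treatment of the two barriers.
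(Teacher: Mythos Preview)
Your proof is correct and follows essentially the same approach as the paper: both arguments work along characteristics via the scalar ODE \eqref{eq:uchar}, observing that $w\equiv 0$ and $w\equiv 1$ are equilibria and invoking uniqueness to prevent a trajectory from leaving $[0,1]$. The paper phrases the uniqueness step as a contradiction at a first crossing time (the alternative you mention), and omits your explicit remark on invertibility of the characteristic flow; otherwise the arguments coincide.
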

\begin{proof}
  Suppose there exist a positive time $t>0$ and a characteristic path such that
  $u(t,X(t))>1$. Then, there must be a time $t_0$ when the first
  breakdown happens, namely
 \[u(t_0, X(t_0))=1,\quad u(t_0+, X(t_0+))>1.\]
 However, solving the initial value problem \eqref{eq:uchar} with 
 $u(t_0, X(t_0))=1$, we obtain
 \[u(t,X(t))=1,\quad\forall~t\geq t_0.\]
 This leads to a contradiction. Hence, $u(x,t)\leq 1$ for any $x$ and
 $t\geq0$.
The preservation of positivity $u(x,t)\geq0$ can be proved using the
same argument.
\end{proof}

\subsection{A priori bounds on the nonlocal term}
We now bound the nonlocal term $e^{-\ubar}$.
First, from \eqref{eq:ubarbound}, we have
\begin{equation}\label{eq:nonlocalLinf}
  e^{-m}\leq e^{-\ubar} \leq 1.
\end{equation}
This shows the nonlocal weight is bounded from above and below, away
from zero.

Next, we compute
\begin{equation}\label{eq:nonlocalW1inf}
\|\pa_x(e^{-\ubar})\|_{L^\infty}=\|u\cdot
e^{-\ubar}\|_{L^\infty}\leq 1.
\end{equation}

For higher derivatives of $e^{-\ubar}$, we have the following estimate.
\begin{proposition}\label{prop:nonlocal} For $s\geq1$,
\[\|e^{-\ubar}\|_{\dot{H}^s}\lesssim\|u\|_{\dot{H}^{s-1}}.\]
\end{proposition}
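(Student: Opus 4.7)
The plan is to exploit a key structural identity specific to the kernel \eqref{ourubar}: differentiating $\ubar(t,x)=\int_x^\infty u(t,y)\,dy$ in $x$ gives $\partial_x\ubar=-u$. Consequently, for $s\geq 1$, $\|\ubar\|_{\dot{H}^s}=\|u\|_{\dot{H}^{s-1}}$, so it suffices to establish a Moser-type composition estimate $\|e^{-\ubar}\|_{\dot{H}^s}\lesssim\|\ubar\|_{\dot{H}^s}$, in which the implicit constant is allowed to depend on $\|\ubar\|_{L^\infty}$. The latter is bounded by the conserved mass $m$ thanks to \eqref{eq:ubarbound}.

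To carry out the composition estimate, I would first observe that $\|e^{-\ubar}\|_{\dot{H}^s}=\|e^{-\ubar}-1\|_{\dot{H}^s}$ since $\dot{H}^s$ annihilates constants for $s\geq 1$; writing $F(y)=e^{-y}-1$ one has $F(0)=0$ and $F$ smooth. For integer $s$ I would expand via the Faà di Bruno formula,
\[
\partial_x^s F(\ubar)=\sum_{k=1}^s\sum_{\substack{\alpha_1+\cdots+\alpha_k=s\\ \alpha_j\geq 1}}c_{k,\alpha}\,F^{(k)}(\ubar)\prod_{j=1}^k\partial_x^{\alpha_j}\ubar,
\]
bound each $F^{(k)}(\ubar)$ uniformly using $\ubar\in[0,m]$, and control each product by Gagliardo--Nirenberg interpolation,
\[
\|\partial_x^{\alpha_j}\ubar\|_{L^{2s/\alpha_j}}\lesssim\|\ubar\|_{\dot{H}^s}^{\alpha_j/s}\|\ubar\|_{L^\infty}^{1-\alpha_j/s},
\]
combined through Hölder's inequality (the exponents $2s/\alpha_j$ satisfy $\sum_j \alpha_j/(2s)=1/2$) to obtain $\big\|\prod_j\partial_x^{\alpha_j}\ubar\big\|_{L^2}\lesssim \|\ubar\|_{\dot{H}^s}\|\ubar\|_{L^\infty}^{k-1}$. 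Summing the finitely many partitions yields $\|F(\ubar)\|_{\dot{H}^s}\leq C(m)\|\ubar\|_{\dot{H}^s}$. For general real $s\geq 1$ the identical bound follows from a standard Littlewood--Paley or paradifferential-calculus argument, with essentially no new content.

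The main point worth emphasizing — and the only mildly subtle one — is the ``gain of one derivative'' built into the statement: the right-hand side involves $\|u\|_{\dot{H}^{s-1}}$ rather than $\|u\|_{\dot{H}^s}$. This gain is delivered entirely by the identity $\partial_x\ubar=-u$, the distinguishing analytic feature of the kernel \eqref{ourubar} that ultimately makes a \emph{sharp} threshold accessible. All the other ingredients — the $L^\infty$ bound on $\ubar$ from conservation of mass, the maximum principle for $u$, and Moser's composition inequality — are classical and combine cleanly without further obstacle.
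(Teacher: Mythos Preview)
Your proposal is correct and follows essentially the same route as the paper: the paper invokes its composition estimate (Theorem~\ref{thm:composition}) with $f(x)=e^{x}$ and $g=-\ubar$, and that theorem is proved in the appendix via exactly the Fa\`a di Bruno/H\"older/Gagliardo--Nirenberg scheme you outline, together with the identity $\pa_x\ubar=-u$ to trade $\|\ubar\|_{\dot{H}^s}$ for $\|u\|_{\dot{H}^{s-1}}$. The only cosmetic difference is that you subtract a constant and work with $F(y)=e^{-y}-1$, while the paper applies the composition bound directly to $e^{-\ubar}$; since $\dot{H}^s$ ignores constants for $s\geq1$, this is immaterial.
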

\begin{proof}
  We apply the composition estimate, stated and proved in Theorem
  \ref{thm:composition}, with 
  $f(x)=e^x$ and $g(x)=-\ubar(t,x)$.

  From \eqref{eq:ubarbound}, we know $g$ is bounded, and $g(x)\in[-m,0]$. Therefore,
  $\|f\|_{C^s([-m,0])}=1$ for any $s\in\mathbb{N}$.

  Theorem \ref{thm:composition} implies
  \[\|e^{-\ubar}\|_{\dot{H}^s}\lesssim\|g\|_{\dot{H}^s}
  =\|u\|_{\dot{H}^{s-1}}.\]
 The last equality is due to the fact that $\pa_xg=u$. 
\end{proof}

\subsection{$L^2$ energy estimate}
We perform a standard $L^2$ energy estimate.
\begin{align}
\frac{1}{2}\frac{d}{dt}\|u(\cdot,t)\|_{L^2}^2=&
-\int_\R u~\pa_x\big(u(1-u)e^{-\ubar}\big)dx
=\int_\R \pa_xu\cdot u(1-u)e^{-\ubar}dx
\nonumber\\
=&-\int_\R \frac{1}{2}u^2\cdot\pa_x(e^{-\ubar})dx -
\int_\R u^2\cdot\pa_xu\cdot e^{-\ubar} dx \nonumber\\
\leq&~
\frac{1}{2}\|u\|_{L^2}^2\|\pa_xe^{-\ubar}\|_{L^\infty}+\|\pa_xu\|_{L^\infty}
\|u\|_{L^2}^2\|e^{-\ubar}\|_{L^\infty}\nonumber\\
\lesssim&~(1+\|\pa_xu\|_{L^\infty})\|u\|_{L^2}^2,\label{eq:L2est}
\end{align}
where we apply \eqref{eq:nonlocalLinf} and \eqref{eq:nonlocalW1inf} in
the last inequality.

A simple Gronwall-type estimate then yields
\[\|u(\cdot,t)\|_{L^2}\leq \|u_0\|_{L^2}\exp
\left(C\int_0^t(1+\|\pa_xu(\cdot,\tau)\|_{L^\infty})d\tau\right).\]
Hence, $u(\cdot,t)\in L^2$ for $t\in[0,T]$ as long as \eqref{BKM} holds. 

\subsection{$H^s$ energy estimate}
Let $\Lambda :=(-\Delta)^{1/2}$ be the pseudo-differential
operator. We perform an energy estimate by acting $\Lambda^s$ on
\eqref{eq:main} and integrate against $\Lambda^s u$. This yields the
evolution of the homogeneous $H^s$-norm on $u$:
\begin{align*}
\frac{1}{2}\frac{d}{dt}\|u(\cdot,t)\|_{\dot{H}^s}^2=&
\int_\R \Lambda^su \cdot\Lambda^s\big(
-(1-2u)e^{-\ubar}\pa_xu-u^2(1-u)e^{-\ubar}\big)dx\\
=&\int_\R \Lambda^su \cdot (2u-1)e^{-\ubar}\cdot\Lambda^s
\pa_xu~dx
+\int \Lambda^su\cdot\big(\left[\Lambda^s,
   (2u-1)e^{-\ubar}\right]\pa_xu \big)dx\\
-&\int_\R \Lambda^su \cdot \big(u^2(1-u)e^{-\ubar}\big)dx
=\Rom{1} + \Rom{2} +\Rom{3}.
\end{align*}
Here, the commutator $[\Lambda^s, f]g$ is defined as
\[ [\Lambda^s, f]g = \Lambda^s(fg)-f \Lambda^sg.\]

We shall estimate the three terms one by one. 

For the first term, apply integration by parts and get
\begin{align*}
\Rom{1}=&\int_\R\frac{1}{2}\pa_x\big((\Lambda^su)^2\big)\cdot (2u-1)e^{-\ubar}~dx
=-\frac{1}{2}\int_\R(\Lambda^su)^2\cdot\pa_x((2u-1)e^{-\ubar})dx\\
\leq&~\frac{1}{2}\|u\|_{\dot{H}^s}^2\|\pa_x((2u-1)e^{-\ubar})\|_{L^\infty}
=\frac{1}{2}\|u\|_{\dot{H}^s}^2\|(2\pa_xu+(2u-1)u)e^{-\ubar}\|_{L^\infty}.
\end{align*}
Since both $u$ and $\ubar$ are bounded, we have
\[\|(2\pa_xu+(2u-1)u)e^{-\ubar}\|_{L^\infty}\leq
2\|\pa_xu\|_{L^\infty}+1.\]
Therefore,
\begin{equation}\label{eq:est1}
\Rom{1}\leq(1+\|\pa_xu\|_{L^\infty})\|u\|_{\dot{H}^s}^2.
\end{equation}

For the second term,
\[\Rom{2}\leq\|u\|_{\dot{H}^s}\big\|\left[\Lambda^s,
   (2u-1)e^{-\ubar}\right]\pa_xu\big\|_{L^2}.\]
Let us state the following two estimates. Both lemmas can be proved
using Littlewood-Paley theory.
\begin{lemma}[Fractional Leibniz rule]\label{lem:prod}
 Let $s\geq0$. There exists a constant $C>0$, depending only on $s$,
 such that
 \[\| fg \|_{\dot{H}^s}\leq C
 \left(\|f\|_{L^\infty}\|g\|_{\dot{H}^s}+\|f\|_{\dot{H}^s}\|g\|_{L^\infty}\right).\]
\end{lemma}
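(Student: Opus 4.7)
The plan is to prove the inequality via Littlewood-Paley theory and Bony's paraproduct decomposition. Let $\{\Delta_j\}_{j\in\mathbb{Z}}$ denote the standard homogeneous Littlewood-Paley projectors on $\R$ and $S_j=\sum_{k<j}\Delta_k$ the low-frequency cutoffs. The case $s=0$ reduces to Hölder's inequality, so assume $s>0$. Write the product as
\[ fg = T_f g + T_g f + R(f,g), \]
with paraproducts $T_f g = \sum_j S_{j-2}f\cdot\Delta_j g$ and $T_g f = \sum_j S_{j-2}g\cdot\Delta_j f$, and diagonal remainder $R(f,g)=\sum_{|j-k|\leq 2}\Delta_j f\cdot\Delta_k g$. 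It suffices to bound each of the three pieces in $\dot{H}^s$.

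For the paraproducts, each block $S_{j-2}f\cdot\Delta_j g$ is Fourier-localized in a dyadic annulus of scale $2^j$, so almost-orthogonality of $\{\Delta_\ell\}$ together with the uniform bound $\|S_{j-2}f\|_{L^\infty}\lesssim\|f\|_{L^\infty}$ yields
\[ \|T_f g\|_{\dot{H}^s}^2 \lesssim \sum_j 2^{2js}\|S_{j-2}f\|_{L^\infty}^2\|\Delta_j g\|_{L^2}^2 \lesssim \|f\|_{L^\infty}^2\|g\|_{\dot{H}^s}^2, \]
and by symmetry $\|T_g f\|_{\dot{H}^s}\lesssim\|g\|_{L^\infty}\|f\|_{\dot{H}^s}$. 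These two bounds already produce the right-hand side of the desired estimate.

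The remainder $R(f,g)$ is the main obstacle. Each piece $\Delta_j f\cdot\Delta_k g$ with $|j-k|\leq 2$ has Fourier support in a \emph{ball} of radius $\sim 2^j$ rather than an annulus, so almost-orthogonality fails and only indices with $j\gtrsim\ell$ contribute to $\Delta_\ell R$. Bounding each product by Hölder's inequality,
\[ \|\Delta_\ell R(f,g)\|_{L^2}\lesssim\sum_{j\geq\ell-3}\|\Delta_j f\|_{L^\infty}\|\Delta_j g\|_{L^2}, \]
I would multiply by $2^{\ell s}$ and rewrite $2^{\ell s}=2^{(\ell-j)s}\cdot 2^{js}$. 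For $s>0$ the geometric factor $2^{(\ell-j)s}$ is summable in $\ell\leq j$, so a discrete Young (or Cauchy-Schwarz) argument in the dyadic indices yields $\|R(f,g)\|_{\dot{H}^s}\lesssim\|f\|_{L^\infty}\|g\|_{\dot{H}^s}$. Swapping the roles of $f$ and $g$ in Hölder's inequality produces the symmetric bound, after which adding the three estimates gives the lemma with a constant $C=C(s)$.

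The delicate step is the remainder: unlike the paraproducts, the loss of annulus localization forces many frequency pairs to contribute to each output dyadic block, and the summability gain coming from $s>0$ is exactly what closes the double sum. The paraproducts themselves are essentially automatic once one believes the standard almost-orthogonality of the Littlewood-Paley decomposition.
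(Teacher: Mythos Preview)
Your proof sketch is correct and follows exactly the standard Littlewood--Paley/paraproduct argument that the paper has in mind: the paper does not write out a proof of this lemma at all, but simply states that it ``can be proved using Littlewood--Paley theory'' and cites \cite[Corollary~2.86]{bahouri2011fourier}, which is precisely the Bony paraproduct decomposition $fg=T_fg+T_gf+R(f,g)$ and the estimates you outline. There is nothing to correct or contrast.
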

A proof of the Fractional Leibniz rule can be found in \cite[Corollary
2.86]{bahouri2011fourier}.

\begin{lemma}[Commutator estimate]\label{lem:com}
 Let $s\geq1$. There exists a constant $C>0$, depending only on $s$,
 such that
 \[\| [\Lambda^s, f]g \|_{L^2}\leq C
 \left(\|\pa_xf\|_{L^\infty}\|g\|_{\dot{H}^{s-1}}+\|f\|_{\dot{H}^s}\|g\|_{L^\infty}\right).\]
\end{lemma}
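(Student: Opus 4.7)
\medskip

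The plan is to prove the commutator estimate via Littlewood-Paley theory, following the classical Kato-Ponce strategy. Denote by $\{\Delta_j\}_{j\geq -1}$ a dyadic Littlewood-Paley decomposition and write $S_j = \sum_{k\leq j-1}\Delta_k$. Bony's paraproduct decomposition gives
\[
fg = T_f g + T_g f + R(f,g),\qquad T_fg:=\sum_j S_{j-1}f\,\Delta_j g,\qquad R(f,g):=\sum_{|j-k|\leq 1}\Delta_j f\,\Delta_k g.
\]
Applying $\Lambda^s$ to this identity and subtracting $f\Lambda^s g = T_f\Lambda^s g + T_{\Lambda^s g} f + R(f,\Lambda^s g)$, the commutator splits into three pieces:
\[
[\Lambda^s,f]g = \big(\Lambda^s T_f g - T_f \Lambda^s g\big) + \big(\Lambda^s T_g f - T_{\Lambda^s g} f\big) + \big(\Lambda^s R(f,g) - R(f,\Lambda^s g)\big).
\]
I would estimate these three contributions separately, making use of the quasi-orthogonality of $\{\Delta_j\}$ in $L^2$ and Bernstein's inequality to pass derivatives to spectrally localized pieces.

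\medskip

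First I would handle the most delicate piece, $\Lambda^s T_f g - T_f\Lambda^s g = \sum_j [\Lambda^s, S_{j-1}f]\Delta_j g$. Here both factors have comparable frequency support only via the spectral localization of $\Delta_j g$, so a first-order Taylor expansion of the symbol of $\Lambda^s$ shows that the commutator gains one derivative on $f$: schematically,
\[
\|[\Lambda^s, S_{j-1}f]\Delta_j g\|_{L^2} \lesssim 2^{j(s-1)}\|\pa_x S_{j-1}f\|_{L^\infty}\|\Delta_j g\|_{L^2} \lesssim \|\pa_x f\|_{L^\infty}\cdot 2^{j(s-1)}\|\Delta_j g\|_{L^2},
\]
which, after almost-orthogonal summation in $j$, yields the bound $\|\pa_x f\|_{L^\infty}\|g\|_{\dot H^{s-1}}$. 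Next, for $\Lambda^s T_g f - T_{\Lambda^sg} f = \sum_j [\Lambda^s, S_{j-1}g]\Delta_j f - \sum_j (S_{j-1}\Lambda^s g - \Lambda^s S_{j-1}g)\Delta_j f$, I would absorb $\Lambda^s$ onto $\Delta_j f$ using Bernstein, bounding the piece by $\|g\|_{L^\infty}\|f\|_{\dot H^s}$. The remainder term $\Lambda^s R(f,g) - R(f,\Lambda^s g)$ is standard: for $s\geq 1$ in one dimension, a direct Bernstein estimate on the doubly-localized pieces $\Delta_j f\,\Delta_k g$ with $|j-k|\leq 1$, combined with Young's inequality on $\ell^2$, gives $\|f\|_{\dot H^s}\|g\|_{L^\infty}$.

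\medskip

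The main obstacle, and the technical heart of the argument, is the first piece: one needs the classical identity that for a smooth frequency-localized function $\phi$, $\|[\Lambda^s,\phi]\psi\|_{L^2}$ gains one full derivative off of $\phi$, at the cost of losing only $s-1$ derivatives on $\psi$. This can be made rigorous via the kernel representation of the convolution operator $\Lambda^s$ against a bump, together with a mean-value argument, or alternatively via the Coifman-Meyer symbol calculus. Once this single-frequency-block commutator estimate is secured, the rest is routine book-keeping; summation in $j$ by Cauchy-Schwarz and almost-orthogonality produces the desired bound. The whole argument is carried out in detail in \cite[Chapter~2]{bahouri2011fourier}, to which we refer the reader; here we have only indicated the structure of the proof.
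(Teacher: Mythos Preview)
Your sketch is sound and follows the standard Kato--Ponce/Littlewood--Paley route. However, note that the paper does not actually prove this lemma: it simply remarks that ``both lemmas can be proved using Littlewood--Paley theory'' and then cites Kato and Ponce \cite{kato1988commutator} for the original result and \cite[Remark~1.5]{li2019kato} for the homogeneous version. In that sense your proposal is consistent with, and considerably more detailed than, what the paper offers.

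One small point on your outline: your algebraic decomposition of the second piece $\Lambda^s T_g f - T_{\Lambda^s g} f$ is not quite right as written (the correction term you wrote does not match what subtraction actually produces), but this is cosmetic. In practice one simply bounds $\Lambda^s T_g f$ and $T_{\Lambda^s g} f$ separately, each by $\|g\|_{L^\infty}\|f\|_{\dot H^s}$, using Bernstein and the frequency localization of $\Delta_j f$; no commutator structure is needed for that piece. With that adjustment your sketch is the standard textbook argument, and your reference to \cite[Chapter~2]{bahouri2011fourier} is appropriate.
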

The commutator estimate is due to Kato and Ponce \cite{kato1988commutator}. See
\cite[Remark 1.5]{li2019kato} for the version for homogeneous operator $\Lambda^s$.

Apply Lemma \ref{lem:com} to the commutator in \Rom{2}. We get
\begin{align*}
\big\|\big[\Lambda^s, &(2u-1)e^{-\ubar}\big]\pa_xu\big\|_{L^2}\lesssim\\
&\|(2u-1)e^{-\ubar}\|_{L^\infty}\|\pa_xu\|_{\dot{H}^{s-1}}+
\| (2u-1)e^{-\ubar}\|_{\dot{H}^s}\|\pa_xu\|_{L^\infty}=\Rom{4}+\Rom{5}.
\end{align*}

Due to maximum principle, $|2u-1|\leq1$. Also,
$\|e^{-\ubar}\|_{L^\infty}\leq1$ by \eqref{eq:nonlocalLinf}. Therefore, \Rom{4} can be
easily estimated by 
\[\Rom{4}\leq\|u\|_{\dot{H}^s}.\]

For \Rom{5}, we apply Lemma \ref{lem:prod} and
Proposition \ref{prop:nonlocal},
\begin{align*}
\Rom{5}\lesssim &\left(\|2u-1\|_{\dot{H^s}}\|e^{-\ubar}\|_{L^\infty}+
\|2u-1\|_{L^\infty}\|e^{-\ubar}\|_{\dot{H^s}}\right)\|\pa_xu\|_{L^\infty}\\
\lesssim&
          \left(\|u\|_{\dot{H}^s}+\|u\|_{\dot{H}^{s-1}}\right)\|\pa_xu\|_{L^\infty}.
\end{align*}

Combine the estimates on \Rom{4} and \Rom{5}, we obtain
\begin{equation}\label{eq:est2}
\Rom{2}\lesssim \|\pa_xu\|_{L^\infty}\|u\|_{\dot{H}^s}\|u\|_{H^s}.
\end{equation}

For the third term, we again apply Lemma \ref{lem:prod} and get
\[\Rom{3}\lesssim\|u\|_{\dot{H}^s}\big(\|u^2(1-u)\|_{\dot{H^s}}\|e^{-\ubar}\|_{L^\infty}+
\|u^2(1-u)\|_{L^\infty}\|e^{-\ubar}\|_{\dot{H}^s}\big).\]
The first part can be further estimated by
\[\|u^2(1-u)\|_{\dot{H^s}}\lesssim2\|u\|_{\dot{H}^s}\|u\|_{L^\infty}\|1-u\|_{L^\infty}
+\|u\|_{L^\infty}^2\|1-u\|_{\dot{H}^s}\lesssim \|u\|_{\dot{H}^s}.\]
Applying Proposition \ref{prop:nonlocal} to the second part, we obtain
\begin{equation}\label{eq:est3}
\Rom{3}\lesssim \|\pa_xu\|_{L^\infty}\|u\|_{\dot{H}^s}\|u\|_{H^s}.
\end{equation}

Gathering the estimates \eqref{eq:est1}, \eqref{eq:est2} and
\eqref{eq:est3}, we derive
\[\frac{d}{dt}\|u(\cdot,t)\|_{\dot{H}^s}^2\lesssim
\|u\|_{\dot{H}^s}^2+ \|\pa_xu\|_{L^\infty}\|u\|_{\dot{H}^s}\|u\|_{H^s}.\]

Together with the $L^2$ estimate \eqref{eq:L2est}, we get the full
$H^s$ estimate
\[\frac{d}{dt}\|u(\cdot,t)\|_{H^s}^2\lesssim
(1+ \|\pa_xu\|_{L^\infty})\|u\|_{H^s}^2.\]
Applying Gronwall inequality, we end up with
\[\|u(\cdot,t)\|_{H^s}\leq \|u_0\|_{H^s}\exp
\left(C\int_0^t(1+\|\pa_xu(\cdot,\tau)\|_{L^\infty})d\tau\right).\]

For $s>3/2$, $H^s$ is embedded in $W^{1,\infty}$. Therefore, if
$u_0\in H^s$, then $\|u_0'\|_{L^\infty}$ is bounded. The solution
exists locally in time. Moreover, $u(\cdot,t)\in H^s$ as long as
\eqref{BKM} holds. This concludes the proof of Theorem \ref{thm:local}.

\section{Critical threshold}\label{sec:threshold}
In this section, we discuss when the criterion \eqref{BKM} holds
globally in time. We start with expressing the dynamics of $d:=\pa_xu$
by differentiate \eqref{eq:main} in $x$:
\[\pa_td+(1-2u)e^{-\ubar}\pa_xd+e^{-\ubar}\big(-2d^2+(3u-5u^2)d+(u^3-u^4)\big)=0.\]
Together with \eqref{eq:uchar}, we get a coupled dynamics of $(d, u)$
along characterstic paths.
\begin{equation}\label{eq:dynamics}
 \begin{cases}
  \dot{d}=\big(2d^2-(3u-5u^2)d-u^3(1-u)\big)e^{-\ubar},\\
  \dot{u}=-u^2(1-u)e^{-\ubar}.
 \end{cases}
\end{equation}
Here, $\dot{f}$ denotes the material derivative of $f$,
\[\dot{f}(t,X(t))=\frac{d}{dt}f(t,X(t))=\pa_tf+((1-2u)e^{-\ubar})\pa_xf.\]

Note that a classical sufficient condition to avoid the breakdown of the
characteristics is that the velocity field is Lipschitz.
\[\left\|\pa_x\big((1-2u)e^{-\ubar}\big)\right\|_{L^\infty}=
  \left\|\big(-2\pa_xu+(1-2u)u\big)e^{-\ubar}\right\|_{L^\infty}
  \leq 1+2\|\pa_xu\|_{L^\infty}.\]
Therefore, as long as condition \eqref{BKM} holds, the characterstic
paths remains valid.

We now perform a phase plane analysis on $(d, u)$ through each
characteristic path. It is worth noting that $e^{-\ubar}$ is
nonlocal. So the values of $(d, u)$ can not be determined solely by
information along the characteristic path. However, the ratio
\[ \frac{\dot{d}}{\dot{u}}=\frac{2d^2-(3u-5u^2)d-u^3(1-u)}{-u^2(1-u)}\]
is local. Therefore, the trajectories of $(d, u)$ only depend on local
information. If we express a trajectory as a function $d=d(u)$, then
it will satisfy the ODE
\begin{equation}\label{eq:traj}
  d'=\frac{2d^2-(3u-5u^2)d-u^3(1-u)}{-u^2(1-u)}.
\end{equation}

Figure \ref{fig:threshold} illustrates the flow map in the phase
plane. In particular, $(0,0)$ is a degenerated hyperbolic point. There
is an inward trajectory which separates the plane into two region.
The left region will flow towards $(0,0)$, and the right region will
flow towards $d\to\infty$. This indicates the two differernt
behaviors: global boundedness versus blowup, respectively.
This is so called the \emph{critical threshold phenomenon}.

For the rest of this section, we will show such phenomenon rigorously.
This then leads to a proof of Theorem \ref{thm:main}.

\begin{figure}[ht]
  \includegraphics[width=.8\linewidth]{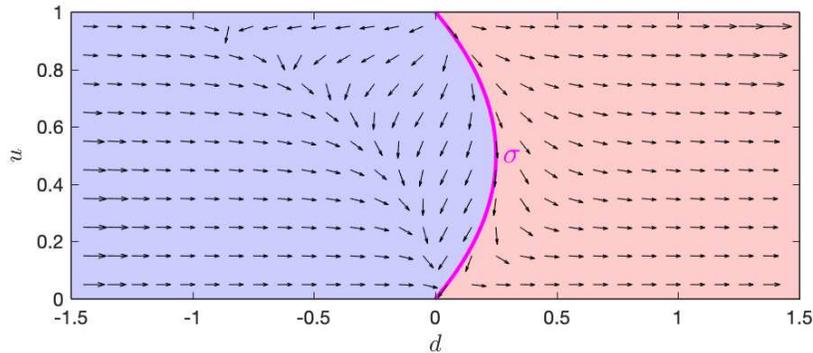}
  \caption{The flow map and the critical threshold in $(d,u)$-plane}\label{fig:threshold}
\end{figure}

\subsection{The sharp critical threshold}
We define the critical threshold that distinguishes the two regions in
Figure \ref{fig:threshold} as $d=\sigma(u)$.
The function $\sigma: [0,1]\to\R$ should satisfy the following ODE
\begin{equation}\label{eq:sigma}
  \sigma'(x)=\frac{2\sigma^2-(3x-5x^2)\sigma
    -x^3(1-x)}{-x^2(1-x)},\quad   \sigma(0)=0.
\end{equation}
In particular, $\sigma'(0)$ can be determined by
\begin{align*}\sigma'(0)=&\lim_{x\to0}\frac{2\sigma(x)^2-(3x-5x^2)\sigma(x)
                           -x^3(1-x)}{-x^2(1-x)}\\
  =&-2\left(\lim_{x\to0}\frac{\sigma(x)}{x}\right)^2+
     3\lim_{x\to0}\frac{\sigma(x)}{x}=-2\sigma'(0)^2+3\sigma'(0).
\end{align*}
This implies $\sigma'(0)=1$.

Therefore, \eqref{eq:sigma} uniquely defines a function
$\sigma$.

\subsection{Global regularity for subcritical initial data}
We now prove the first part of Theorem \ref{thm:main}. The goal is to
show that, if the initial data satisfy \eqref{sub}, then condition
\eqref{BKM} holds for any time $T$. Equivalently, we will show
$d=\pa_xu$ is bounded along all characterstic paths.

First, we show an upper bound of $d$.
\begin{proposition}[Invariant region]
  Let $(d, u)$ satisfy the dynamics \eqref{eq:dynamics} with initial condition
  $d_0\leq\sigma(u_0)$.
  Then, $d(t)\leq\sigma(u(t))$ for any time $t\geq0$.
\end{proposition}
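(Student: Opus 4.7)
The plan is to prove the invariance by studying the quantity $g(t):=d(t)-\sigma(u(t))$ along each characteristic path, and showing it satisfies a linear ODE whose sign-preserving property directly yields the claim. First I would compute the material derivative
\[\dot g = \dot d - \sigma'(u)\,\dot u,\]
substituting $\dot d$ and $\dot u$ from \eqref{eq:dynamics} and clearing $\sigma'(u)$ via the defining relation \eqref{eq:sigma}. The crucial observation is that both $\dot d$ and $\dot u$ carry the common nonlocal factor $e^{-\bar u}$, so this factor survives the subtraction and the nonlocal term will not enter the structural comparison.

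Next I would exploit the algebraic structure of the two numerators. Since \eqref{eq:sigma} is obtained precisely by equating $\dot d / \dot u$ to $\sigma'$, the difference
\[ \big(2d^2-(3u-5u^2)d-u^3(1-u)\big)-\big(2\sigma^2-(3u-5u^2)\sigma-u^3(1-u)\big) \]
collapses to $2(d^2-\sigma^2)-(3u-5u^2)(d-\sigma)$, which factors through $d-\sigma=g$. The cubic term $u^3(1-u)$ cancels exactly, which is the whole point of incorporating that term into the definition of $\sigma$. The resulting identity is
\[ \dot g(t) = A(t)\,g(t), \qquad A(t):=e^{-\bar u(t,X(t))}\big[2(d+\sigma(u))-(3u-5u^2)\big]. \]

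Then I would close the argument via a Gronwall/integrating-factor step: on any interval $[0,T]$ of local existence guaranteed by Theorem \ref{thm:local}, the embedding $H^s\hookrightarrow W^{1,\infty}$ makes $d=\pa_x u$ bounded, while $u\in[0,1]$, $\bar u\in[0,m]$, and $\sigma$ is continuous on $[0,1]$. Hence $A\in L^\infty_{\mathrm{loc}}$, and the explicit formula
\[ g(t)=g(0)\exp\!\left(\int_0^t A(s)\,ds\right) \]
shows that $g(0)\leq 0$ forces $g(t)\leq 0$ on $[0,T]$. A standard continuation/bootstrap then pushes this to the full interval of existence of the characteristic.

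The main obstacle I anticipate is not in the factorization (which is essentially guaranteed by how $\sigma$ was designed), but rather in handling the regularity of $\sigma$ and of $A$ uniformly: since \eqref{eq:sigma} is singular at $u=0$ and degenerate at $u=1$, one has to check that $\sigma(u)$ remains bounded as $u$ ranges in $[0,1]$ so that $A$ is genuinely locally integrable, and that the trajectory $u(t)$, being governed by $\dot u=-u^2(1-u)e^{-\bar u}$, stays in the strict interior $(0,1)$ whenever it starts there, so that the phase-plane description $d=d(u)$ is meaningful. These are delicate but manageable: $\sigma'(0)=1$ gives regularity at $0$, and the behavior near $u=1$ can be read off \eqref{eq:sigma} by inspecting that $\sigma(1)$ satisfies $(3-5)\sigma(1)-2\sigma(1)^2=0$, fixing the endpoint.
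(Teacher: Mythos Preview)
Your approach is correct and in fact slightly more streamlined than the paper's. The paper proceeds by case-splitting: it handles $u_0=0$ and $u_0=1$ separately via the reduced Riccati dynamics \eqref{eq:u0}--\eqref{eq:u1}, and for $u_0\in(0,1)$ it argues by contradiction, invoking Cauchy--Lipschitz uniqueness for the trajectory ODE \eqref{eq:traj} in the phase plane (parametrized by $u$) to conclude that a path touching $d=\sigma(u)$ cannot leave it. Your argument instead works directly in the time variable: the same algebraic factorization the paper later uses in the supercritical part (where it writes $(d(u)-\sigma(u))'=A(u)(d(u)-\sigma(u))$ with $u$ as the independent variable) is deployed here as $\dot g=A(t)g$, and Gronwall replaces the uniqueness argument. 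The advantage of your route is that $A(t)$ carries no $u^2(1-u)$ denominator, so the endpoint cases $u=0,1$ are absorbed uniformly rather than treated separately; the price is that you must verify $\sigma\in C^1[0,1]$ to justify the chain rule for $\sigma(u(t))$, which you correctly flag as the residual technical point (and which the paper also takes for granted when asserting $\sigma(1)=0$).
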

\begin{proof}
  We first consider two special cases $u_0=0$ and $u_0=1$. In both
  cases, $\dot{u}=0$ and hence $u$ does not change in time.

  For $u_0=0$, the dynamics of $d$ becomes
  \begin{equation}\label{eq:u0}
    \dot{d}=2d^2e^{-\ubar}.
  \end{equation}
  If $d_0\leq\sigma(0)=0$, clearly $d(t)\leq0$ for any $t\geq0$.

  For $u_0=1$. the dynamics of $d$ becomes
    \begin{equation}\label{eq:u1}
      \dot{d}=2d(d+1)e^{-\ubar}.
    \end{equation}
  Again, if $d_0\leq\sigma(1)=0$, then $d(t)\leq0$ for any $t\geq0$.
  
  Next, we consider the case $u_0\in(0,1)$. Here, we use the fact that
  trajectories do not cross. To be more precise, we argue by
  a contradiction. Suppose there exists a time $t$ such that
  $d(t)>\sigma(u(t))$. Then, there must exist a time $t_0$ so that the
  $(d,u)$ first exit the region at $t_0+$. By continuity,
  $d(t_0)=\sigma(u(t_0))$.
  Starting from $(d(t_0),u(t_0))$, the trajectory satisfies
  \eqref{eq:traj}.

  By definition \eqref{eq:sigma}, $d=\sigma(u)$ is a
  solution in the phase plane. The standard Cauchy-Lipschitz theorem
  ensures that \eqref{eq:traj} with initial condition $(d(t_0),u(t_0))$
  has a local unique solution. Therefore, the solution has to be
  $d(t_0+)=\sigma(u_0(t_0+))$. This contradicts
  the assumption that $(d,u)$ exit the region at $t_0+$.
\end{proof}

Next, we show a lower bound of $d$. This can be easily observed by
Figure \ref{fig:threshold}, as the flow is moving to the right as long
as $d<-1$.
\begin{proposition}
   Let $(d, u)$ satisfy the dynamics \eqref{eq:dynamics}. Then, for
   any $t\geq0$,
   \[d(t)\geq\min\{-1, d_0\}.\]
\end{proposition}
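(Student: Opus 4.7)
The plan is a phase-plane barrier argument. Set $d_* := \min\{-1, d_0\} \leq -1$. I want to show that whenever a trajectory hits the level $d = d_*$ from above, the vector field points upward (i.e.\ $\dot d \geq 0$), which forces $d(t) \geq d_*$ for all $t \geq 0$.

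The first step is the key computation: evaluating the $d$-equation in \eqref{eq:dynamics} at $d = -1$,
\[
  \dot d\big|_{d=-1} = \big(2 + 3u - 5u^2 - u^3 + u^4\big)\,e^{-\ubar} = (u-1)(u^3 - 5u - 2)\,e^{-\ubar}.
\]
On $u \in [0,1]$ one factor is $\leq 0$, and the cubic $u^3 - 5u - 2$ has derivative $3u^2 - 5 \leq -2 < 0$ and value $-2$ at $u=0$, so it is strictly negative throughout $[0,1]$. Hence $\dot d|_{d=-1} \geq 0$, with equality exactly at $u = 1$. Finding this factorization is the main (minor) obstacle; once in hand everything else is soft.

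Next I extend the sign information to all levels $c \leq -1$. Writing $Q_c(u) := 2c^2 - (3u-5u^2)c - u^3(1-u)$, so that $\dot d|_{d=c} = Q_c(u)\,e^{-\ubar}$, I compute
\[
  \pa_c Q_c(u) = 4c + 5u^2 - 3u \leq -4 + 2 = -2 < 0 \qquad \text{for } c \leq -1,\ u \in [0,1].
\]
Thus $Q_c(u)$ is strictly decreasing in $c$ on $(-\infty, -1]$, so $Q_c(u) \geq Q_{-1}(u) \geq 0$, with strict inequality as soon as $c < -1$.

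Finally, a standard barrier argument closes it out. Suppose $d(t_2) < d_*$ for some $t_2 > 0$ and set $t_1 = \sup\{t \in [0, t_2]: d(t) \geq d_*\}$, so $d(t_1) = d_*$ by continuity and $d$ dips strictly below $d_*$ immediately after $t_1$. If $d_* = d_0 < -1$, then $\dot d(t_1) > 0$ strictly by the Step~2 bound, a contradiction. If $d_* = -1$ (so $d_0 \geq -1$), I use that $\dot u = -u^2(1-u)e^{-\ubar} \leq 0$, hence $u(t) \leq u_0$; when $u_0 < 1$ we have $u(t_1) < 1$ and Step 1 gives strict positivity, contradicting the descent. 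The degenerate case $u_0 = 1$ is handled separately: then $u(t) \equiv 1$ and the $d$-equation collapses to $\dot d = 2d(d+1)e^{-\ubar}$, for which $d = -1$ is a stable equilibrium from above, so $d(t) \geq -1$ follows directly from inspection of the scalar ODE. This completes the proof.
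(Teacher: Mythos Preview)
Your argument is correct and follows essentially the same barrier idea as the paper: both show that $\dot d \geq 0$ whenever $d \leq -1$ and $u\in[0,1]$, which traps $d$ above $\min\{-1,d_0\}$. The paper obtains this by factoring the quadratic in $d$ as $2(d-d_-)(d-d_+)$ and checking that the smaller root satisfies $d_-\geq -1$ on $[0,1]$, whereas you verify nonnegativity at $d=-1$ via the factorization $(u-1)(u^3-5u-2)$ and then push it to all $c\leq -1$ using $\partial_c Q_c<0$; your treatment of the degenerate endpoint $u_0=1$ is also more explicit than the paper's.
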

\begin{proof}
  We rewrite
  \[\dot{d}=2(d-d_-)(d-d_+)e^{-\ubar},\quad
    d_\pm=\frac{(3u-5u^2)\pm\sqrt{(3u-5u^2)^2+8u^3(1-u)}}{4}.\]
  Then, $\dot{d}\geq0$ if $d\leq d_-$. This implies
  $d(t)\geq\min\{d_-, d_0\}$. Note that for $u\in[0,1]$, $d_-\geq1$.
  Therefore, we obtain the lower bound.  
\end{proof}

Combining the two bounds, we know that along each characteristic path,
$d$ is bounded in all time. Collecting all characterstic paths, we
obtain $\|\pa_xu(t,\cdot)\|_{L^\infty}$ is bounded for any $t\geq0$.
Global regularity then follows from Theorem \ref{thm:local}.

\subsection{Finite time breakdown for supercritical initial data}
We turn to prove the second part of Theorem \ref{thm:main}.
Suppose the initial data satisfy \eqref{super}. Then, we consider the
characteristic path originated at $x_0$, namely
$d_0=u_0'(x_0)$ and $u_0=u_0(x_0)$. So,
\begin{equation}\label{eq:superinit}
  d_0>\sigma(u_0).
\end{equation}

For $u_0=0$ or $u_0=1$, finite time blow up can be easily obtain by
the Ricatti-type dynamics \eqref{eq:u0} and \eqref{eq:u1}. Moreover,
as $0\leq u\leq1$, we must have $d_0=0$ when $u_0=0$ or
$1$. Therefore, there is no supercritical data with $u_0=0$ or $1$.

We focus on the case when $u_0\in(0,1)$. The main idea is illustrated
in Figure \ref{fig:blowup}. For each trajectory starting at a
supercritical initial point $(d_0,u_0)$, $u$ is getting close to 0 as
time evolves, unless blowup already happens. When $u$ becomes close to 0,
the dynamics of $d$ becomes close to \eqref{eq:u0}. Then, if $d$ is
away from 0, the Ricatti-type dynamics will lead to a finite time blowup.

\begin{figure}[ht]
  \includegraphics[width=.6\linewidth]{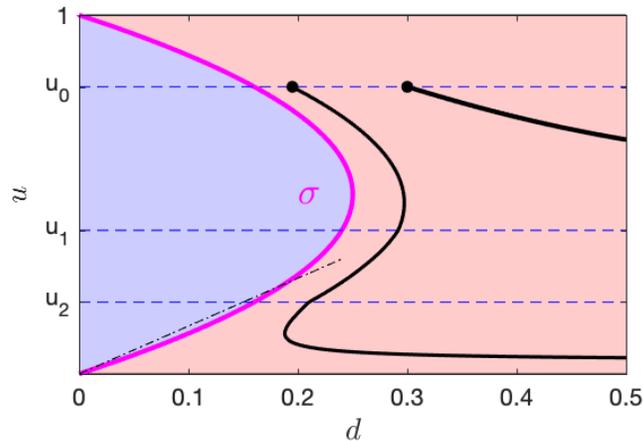}
  \caption{An illustration of typical trajectories with supercritical
    initial data $(d_0, u_0)$. Case 1: blow up happens before the
    trajectory reaches $u_1$. Case 2: the trajectory passes $u_1$, but
  blow up eventually happens in finite time.}\label{fig:blowup}
\end{figure}

To rigorously justify the idea, we first examine the dynamics of $u$
in \eqref{eq:dynamics}.

\begin{proposition}\label{prop:uto0}
  Let $(d,u)$ be a solution of \eqref{eq:dynamics} with supercritical
  initial data $(d_0, u_0)$. Then, for any $u_1\in(0,u_0)$, there
  exists a finite time $t_1$ such that, either $d(t)\to\infty$ before
  $t_1$, or $u(t_1)\leq u_1$.
\end{proposition}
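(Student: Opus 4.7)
The plan is to argue directly from the ODE system \eqref{eq:dynamics} by exploiting that the $u$-equation is decoupled from $d$. The key observation is that $\dot u = -u^2(1-u)e^{-\ubar} \leq 0$, so $u$ is non-increasing along the characteristic as long as the trajectory exists. The discussion preceding the proposition already establishes that supercritical data must satisfy $u_0\in(0,1)$, so for any $u_1\in(0,u_0)$ the closed interval $[u_1,u_0]$ is compactly contained in $(0,1)$.

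Next, I would produce a strictly negative upper bound on $\dot u$ over this interval. The function $u \mapsto u^2(1-u)$ is continuous and strictly positive on $[u_1,u_0]$, so it attains a positive minimum $c_1>0$. Combined with the a priori lower bound $e^{-\ubar}\geq e^{-m}$ from \eqref{eq:nonlocalLinf}, this gives
\begin{equation}
\dot u(t) \leq -c_1 e^{-m} \quad \text{as long as } u(t)\in[u_1,u_0].
\end{equation}
Integrating this differential inequality forces $u$ to reach $u_1$ within time
\begin{equation}
t_1 := \frac{u_0-u_1}{c_1 e^{-m}},
\end{equation}
provided the trajectory of \eqref{eq:dynamics} persists up to $t_1$.

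The final step is to identify the only obstruction to persistence of the trajectory. The right-hand side of \eqref{eq:dynamics} is polynomial in $(d,u)$ and the nonlocal factor $e^{-\ubar}$ is uniformly bounded in $[e^{-m},1]$, so standard ODE continuation extends the trajectory for as long as $(d,u)$ stays in a bounded set. The maximum principle confines $u$ to $[0,1]$, and the preceding lower-bound proposition gives $d\geq\min\{-1,d_0\}$, so the only way the trajectory can fail before time $t_1$ is $d(t)\to+\infty$. I do not anticipate any genuine obstacle: the nonlocality of $\ubar$ is entirely absorbed by the harmless bound $e^{-\ubar}\geq e^{-m}$, and the rest of the argument is just monotonicity plus integration over a compact range of $u$.
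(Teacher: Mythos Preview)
Your argument is correct and follows the same idea as the paper: bound the nonlocal factor via $e^{-\ubar}\geq e^{-m}$ and use it to force $u$ down to $u_1$ in finite time along the characteristic, with the only possible obstruction being $d\to+\infty$. The sole difference is cosmetic: the paper compares $u$ to the explicit solution $\eta$ of $\eta'=-e^{-m}\eta^2(1-\eta)$, $\eta(0)=u_0$, and solves for $t_1$ in closed form, whereas you take the cruder but perfectly adequate route of minimizing $u^2(1-u)$ over the compact interval $[u_1,u_0]$ to get a linear decay bound.
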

\begin{proof}
  Using the bound on the nonlocal term \eqref{eq:nonlocalLinf}, we get
  \[\dot{u}\leq-e^{-m}u^2(1-u).\]
  As long as $(d, u)$ is bounded, the characterstic path stays valid. 

  The following comparison principle holds. Let $\eta=\eta(t)$ satisfy the ODE
  \begin{equation}\label{eq:eta}
    \eta'=-e^{-m}\eta^2(1-\eta),\quad \eta(0)=u_0.
  \end{equation}
  Then, $u(t)\leq\eta(t)$. Indeed,
  \[\dot{u}(t)-\eta'(t)\leq
    e^{-m}\left(-u^2(1-u)+\eta^2(1-\eta)\right)
    \leq3e^{-m}(u-\eta).\]
  This implies
  \[u(t)-\eta(t)\leq (u(0)-\eta(0))e^{3e^{-m}t} \leq0.\]

  The dynamics $\eta$ in \eqref{eq:eta} can be solved explicitly
  \[\left.\left(\frac{1}{\eta}+\log\frac{1-\eta}{\eta}\right)\right]_{u_0}^{\eta(t)}=e^{-m}t.\]
 Therefore, $\eta(t_1)=u_1$ at
 \[t_1=e^m\left(\frac{1}{u_1}+\log\frac{1-u_1}{u_1}-\frac{1}{u_0}-\log\frac{1-u_0}{u_0}\right)<+\infty.\]
  Applying the comparison principle, we end up with $u(t_1)\leq u_1$.
\end{proof}
Proposition \ref{prop:uto0} distinguishes the two cases illustrated in
Figure \ref{fig:blowup}. Either blowup happens before $u$ reaches
$u_1$, which takes finite time, or the trajectory passes $u_1$. We
shall focus on the latter case from now on.

Next, we argue that by picking a small enough $u_1>0$, the dynamics
\eqref{eq:dynamics} will lead to a blowup in finite time, as long as
$d$ stays away from zero.
\begin{proposition}\label{prop:dtoinf}
  Let $(d,u)$ be a solution of \eqref{eq:dynamics}.
  Suppose $d$ is uniformly bounded away from zero, namely there exists a $C_*>0$
  such that
  \begin{equation}\label{eq:dlow}
    d(t)\geq C_*,\quad \forall~t\geq0.
  \end{equation}
  Then, there exists a $u_1>0$, depending on $C_*$, such that, with the initial condition
  $(d(t_1), u(t_1)=u_1)$, the solution has to blow up in finite time.
\end{proposition}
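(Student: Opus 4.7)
The plan is to exploit the Riccati-type structure of the $d$-equation that emerges when $u$ becomes small. First I observe that, because $\dot u = -u^2(1-u)e^{-\ubar}\leq 0$ from \eqref{eq:dynamics}, the value of $u$ is non-increasing along the characteristic. Hence, once $u(t_1)\leq u_1$ is reached, we automatically have $u(t)\leq u_1$ for every later time $t\geq t_1$ on which the characteristic remains defined.

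Next, set $P(d,u):=2d^2-(3u-5u^2)d-u^3(1-u)$, so that $\dot d = P(d,u)\,e^{-\ubar}$. For $u\in[0,u_1]$, the crude estimates $|3u-5u^2|\leq 3u_1$ and $u^3(1-u)\leq u_1^3$ give
\[
P(d,u)\geq 2d^2-3u_1 d-u_1^3.
\]
I then choose $u_1=u_1(C_*)>0$ small enough that the right-hand side dominates $d^2$ whenever $d\geq C_*$; explicitly, it suffices to require $3u_1+\sqrt{9u_1^2+4u_1^3}\leq 2C_*$, so that $d^2 - 3u_1 d - u_1^3\geq 0$ on $[C_*,\infty)$. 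Combined with the uniform lower bound $e^{-\ubar}\geq e^{-m}$ from \eqref{eq:nonlocalLinf} and the standing hypothesis $d(t)\geq C_*$, this produces the Riccati-type inequality
\[
\dot d(t)\geq e^{-m}\,d(t)^2 \qquad \text{for } t\geq t_1.
\]

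Finally I compare with the scalar ODE $\dot y=e^{-m}y^2$, $y(t_1)=d(t_1)\geq C_*$, which blows up explicitly at $t_1+e^m/d(t_1)\leq t_1+e^m/C_*$. A standard comparison argument forces $d(t)\geq y(t)$, so $d(t)\to\infty$ at or before this finite time. By the regularity criterion \eqref{BKM}, this is precisely the blowup claimed.

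The main technical point is the interlocking choice of $u_1$ and $C_*$: one must pick $u_1$ small enough \emph{relative to} $C_*$ so that the linear and cubic corrections in $P(d,u)$ cannot undo the leading $2d^2$ term as $d$ grows. Once that choice is made, the monotonicity of $u$ along the characteristic keeps the trajectory in the favorable regime $u\leq u_1$ without any further argument, and a single Riccati comparison closes the proof.
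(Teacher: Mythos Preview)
Your proof is correct and follows essentially the same route as the paper: both use the monotonicity of $u$ to freeze $u\le u_1$, derive the identical lower bound $P(d,u)\ge 2d^2-3u_1 d-u_1^3$, choose $u_1$ small relative to $C_*$, invoke $e^{-\ubar}\ge e^{-m}$, and finish with a Riccati comparison. The only cosmetic difference is that the paper keeps the factored form $\dot d\ge 2e^{-m}(d-d_-)(d-d_+)$ with the explicit choice $u_1=C_*/4$ and solves that logistic-type ODE, whereas you absorb the lower-order terms into $d^2$ to obtain the cleaner inequality $\dot d\ge e^{-m}d^2$; your version gives a slightly simpler blowup estimate at no cost.
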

\begin{proof}
  As $u(t_1)=u_1$, we know $u(t)\leq u_1$ for
  any $t\geq t_1$. Then, we get
  \[\dot{d}\geq
    e^{-\ubar}(2d^2-3u_1 d-u_1^3)=2e^{-\ubar}(d-d_-)(d-d_+),\quad
    d_\pm=\frac{3\pm\sqrt{9+8u_1}}{4}u_1.\]

  Pick $u_1=C_*/4$, then   \[d(t_1)\geq C_*=4u_1>2d_+.\]
  This implies $d(t)>2d_+$ for all $t\geq t_1$. We can then use
  \eqref{eq:nonlocalLinf} to bound the nonlocal term and get
  \begin{equation}\label{eq:dblow}
    \dot{d}\geq 2e^{-m}(d-d_-)(d-d_+),\quad\forall~t\geq t_1.
  \end{equation}
  Then, by a comparison principle (similar as the one used in
  Proposition \ref{prop:uto0}), the solution
  \[d(t)\geq
    \frac{d_-e^{2e^{-m}(d_+-d_-)(t-t_1)}(d(t_2)-d_+)-d_+(d(t_1)-d_-)}
    {e^{2e^{-m}(d_+-d_-)(t-t_1)}(d(t_1)-d_+)-(d(t_1)-d_-)},\]
  where the right hand side is the exact solution of the ODE
  \eqref{eq:dblow}  with an equal sign. It blows up at
  \[T_*=t_1+\frac{1}{2e^{-m}(d_+-d_-)}\log\frac{d(t_1)-d_-}{d(t_1)-d_+}
  <t_1+\frac{2e^m}{C_*}<+\infty.\]
   Therefore, $d$ has to blow up no later than $T_*$.
 \end{proof}

 We are left to show the uniform lower bound on $d$, \textit{i.e.} condition
 \eqref{eq:dlow}, for any supercritical initial data. We shall work
 with trajectories in the phase plane.

 Let us denote $d=d(u)$ be the trajectory that go through $(d_0,u_0)$.
 As both $d$ and $\sigma$ satisfy \eqref{eq:traj}, we compute
  \[(d(u)-\sigma(u))'=\frac{2(d(u)+\sigma(u))-(3u-5u^2)}{-u^2(1-u)}(d(u)-\sigma(u))=:
    A(u)(d(u)-\sigma(u)).\]
  Since $(d_0, u_0)$ satisfy \eqref{eq:superinit}, we get $d(u_0)-\sigma(u_0)>0$.  
 $A(u)$ is bounded as long as $u$ stays away from 0 and 1. Therefore,
 we obtain
 \[d(u)\geq\sigma(u)\geq0,\quad\forall~u\in(0,1).\]

 Moreover, for any $u\in(0,u_0)$, we can estimate $A$ by
  \[A(u)\leq \frac{3-5u}{u(1-u)} \leq \frac{3}{u}.\]
  Integrating in $[u, u_0]$, we get
  \begin{equation}\label{eq:dbound}
    d(u)\geq d(u)-\sigma(u)=(d_0-\sigma(u_0))\exp\left[-\int_u^{u_0}A(u)du\right]
    \geq
    \frac{(d_0-\sigma(u_0)) }{u_0^3}u^3.
  \end{equation}

  Unfortunately, this bound is not uniform in $(0, u_0]$. We need an enhanced
  estimate.

  Let $u_2>0$ such that
  \begin{equation}\label{eq:sigmaboost}
    \sigma(u)\geq\frac{3}{4}u,\quad \forall~u\in[0, u_2].
  \end{equation}
  Note that such $u_2$ exists as $\sigma'(0)=1$.

  For $u\in(0, u_2]$, using \eqref{eq:sigmaboost}, we obtain an improved
  estimate on $A$ as follows.
  \[A(u)\leq \frac{4\sigma(u)
      -(3u-5u^2)}{-u^2(1-u)} \leq \frac{3u
      -(3u-5u^2)}{-u^2(1-u)} = \frac{-5}{1-u}\leq -5.\]
  Since $A(u)$ is negative, we immediately get
  \[
    d(u)\geq d(u)-\sigma(u)\geq d(u_2)-\sigma(u_2),\quad\forall~u<u_2,~u\in
    \textnormal{Dom}(d).
  \]
  This, together with \eqref{eq:dbound}, shows a uniformly lower bound on $d$
  \[d(u)\geq \frac{d_0-\sigma(u_0)}{u_0^3}u_2^3,\quad \forall~u\leq
    u_0,~u\in \textnormal{Dom}(d).\]

  Condition \eqref{eq:dlow} follows immediately, with
  $C_*=(d_0-\sigma(u_0))u_2^3u_0^{-3}$.

\section{Examples and simulations}\label{sec:exp}
In this section, we present examples and numerical simulations to
illustrate our main critical threshold result, Theorem \ref{thm:main}. 

The numerical method we use is the standard finite volume scheme, with
a large enough computational domain. One can consult
\cite{kurganov2009non} for an extensive discussion on the
numerical implementation.

We shall also compare the numerical results for the three different
types of nonlocal interaction kernels. Recall
\begin{equation}\label{eq:kernels}
  K(x)=
  \begin{cases}
    0, & \text{{\color{blue}\ding{172}}  LWR model: look-ahead distance }L=0, \\
    1_{[-1, 0)} (x), & \text{{\color{magenta}\ding{173}}  SK model: look-ahead distance }L=1,\\
    1_{(-\infty,0]} (x), &  \text{{\color{red}\ding{174}}  Our model: look-ahead distance }L=\infty,\\
    1, & \text{\ding{175}  LWR model: globally uniform kernel}, \\
   \end{cases}
\end{equation}
Here, $1_{A}$ denotes the indicator function of a set $A$.

\subsection{Supercritical initial data}
Many smooth initial data $u_0$ lie in the supercritical region
\eqref{super}. In particular, we argue that all compactly supported smooth
function lies in the supercritical region.

\begin{proposition}
Let $u_0\in C^1(\R)$ is non-negative and compactly supported. Then,
$u_0$ satisfies the
supercritical condition \eqref{super}.
\end{proposition}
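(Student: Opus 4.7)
The plan is to argue by contradiction, using the local behavior of $\sigma$ near the origin. Suppose, contrary to \eqref{super}, that $u_0'(x)\leq\sigma(u_0(x))$ holds for every $x\in\R$. Excluding the trivial case $u_0\equiv 0$, since $u_0\geq 0$ is compactly supported and nontrivial, the quantity
\[
 z := \inf\{x\in\R : u_0(x)>0\}
\]
is a finite real number, and $u_0(z)=0$ by continuity. By the definition of the infimum, every right neighborhood $(z, z+\eps)$ must contain a point where $u_0$ is strictly positive.

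Next, I would exploit the behavior of $\sigma$ near $0$. The computation following \eqref{eq:sigma} yields $\sigma(0)=0$ and $\sigma'(0)=1$, and the defining ODE \eqref{eq:sigma} shows that $\sigma$ is $C^1$ in a neighborhood of the origin. Consequently, there exist $\delta>0$ and $C>0$ such that $0\leq \sigma(y)\leq C\,y$ for all $y\in[0,\delta]$. By continuity of $u_0$ at $z$, there is an $\eps>0$ with $u_0(x)\leq \delta$ on $[z, z+\eps]$.

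On this interval, the subcritical assumption together with the linear majorization gives $u_0'(x)\leq C\,u_0(x)$. Since $u_0(z)=0$ and $u_0\geq 0$, a direct comparison with the linear ODE $y'=Cy$, $y(z)=0$ (whose only solution is $y\equiv 0$) forces $u_0\equiv 0$ on $[z, z+\eps]$, contradicting the definition of $z$. The only subtlety worth highlighting is the linear majorization $\sigma(y)\leq C\,y$ near the origin; this follows immediately from \eqref{eq:sigma} together with $\sigma'(0)=1$, after which the whole statement reduces to a one-line Gronwall-type comparison. The geometric meaning is transparent: the trajectory $d=\sigma(u)$ leaves $(0,0)$ with slope $1$, so any compactly supported, nonnegative $u_0$ which rises from zero must, at its left rising edge, have a derivative that overshoots $\sigma$.
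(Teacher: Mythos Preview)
Your proof is correct and follows essentially the same route as the paper's: assume subcriticality, locate the left endpoint $z$ of the support, bound $u_0'\leq C\,u_0$ near $z$ via a linear majorization of $\sigma$, and obtain a contradiction by Gronwall. The paper simply uses the global bound $\sigma(u)\leq u$ (so $C=1$ on all of $\R$) in place of your local estimate coming from $\sigma'(0)=1$, but otherwise the arguments are identical; your handling of the trivial case $u_0\equiv 0$ is also appropriate, since the proposition tacitly assumes $u_0$ is not identically zero.
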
\label{prop:compact}
\begin{proof}
  We argue by contradiction. Suppose $u_0$ lies in the subcritical
  region. Then, we have
  \begin{equation}\label{eq:compact}
    u_0'(x)\leq u_0(x),\quad\forall~x\in\R.
  \end{equation}  
  Let $x_L$ be the left end point of the support of $u_0$, namely
  \[x_L=\arg\inf_x\{u_0(x)>0\}.\]
  By continuity, we know $u_0(x_L)=0$. Solving the ODE
  \eqref{eq:compact} with initial condition at $x_L$ yields
  \[u_0(x)\leq0,\quad\forall~x\geq x_L.\]
  This contradicts with the definition of $x_L$.
  Hence, $u_0$ can not lie in the subcritical region. It must be
  supercritical.
\end{proof}

As an example, let us take the following smooth and compactly
supported initial data.
\begin{equation}\label{eq:compactinit}
u_0 (x)=
  \begin{cases}
    e^{-\frac{1}{1-x^2}},   & |x|<1, \\
    0,   & |x| \geq 1.
  \end{cases}
\end{equation}
Figure \ref{fig:sup_threshold} shows the contour plot of $(u_0'(x),
u_0(x))$ in the phase plane for all $x\in\R$. Clearly, the curve does not lie in the
subcritical region. So, $u_0$ is supercritical. Theorem \ref{thm:main}
then implies a finite time wave break-down.

\begin{figure}[ht]
  \includegraphics[width=.8\linewidth]{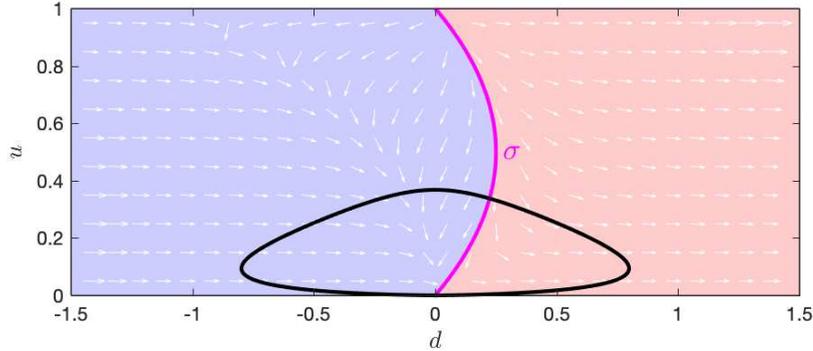}
  \caption{The contour plot of $(u'_0(x), u_0(x))$ in the phase
    plane where $u_0$ is \eqref{eq:compactinit}.
    This initial condition lies in the supercritical region.}\label{fig:sup_threshold}
\end{figure}

Figure \ref{fig:sup_compare} shows the numerical result for the
model with initial data \eqref{eq:compactinit}, together with other
models. The wave break-down can be easily observed, which matches our
theoretical result.

\begin{figure}[ht]
  \includegraphics[width=.8\linewidth]{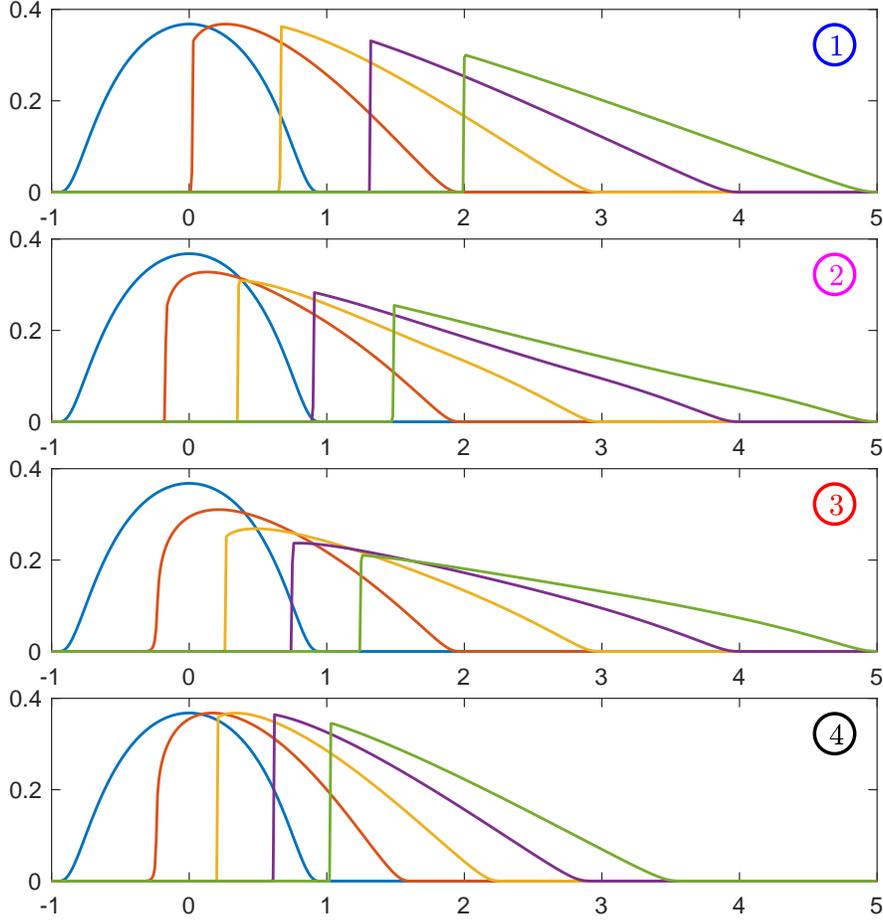}
  \caption{Snapshots of solutions for the dynamics for the four
    kernels, with supercritical
    initial condition \eqref{eq:compactinit} at time $t=0,
    1, 2, 3, 4$.}\label{fig:sup_compare}
\end{figure}

Note that since
\begin{equation}\label{eq:kernelcomp}
  0\leq 1_{[-1.0)}(x) \leq 1_{(-\infty,0]}(x)\leq
1,\quad\forall~x\in\R,
\end{equation}
model \ding{172} has the fastest wave speed, while model \ding{175} has the
slowest. This is indeed captured in the numerical result.

\subsection{Subcritical initial data}\label{sec:expsub}
We now construct an initial condition $u_0$ that lies in the subcritical
region \eqref{sub}. 

Due to Proposition \ref{prop:compact}, $u_0$ can not be compactly
supported. Moreover, we need $u_0\in L^1(\R)$. One valid choice is that
$u_0$ decays algebraically when $x\to-\infty$, namely
$u_0(x)\sim(-x)^{-\beta}$ for $\beta>1$. We can check
\[\lim_{x\to-\infty}\frac{u_0'(x)}{u_0(x)}=
\lim_{x\to-\infty}\frac{\beta(-x)^{-\beta-1}}{(-x)^{-\beta}}=0<1.\]
Therefore, $(u_0'(x), u_0(x))$ should lie in the subcritical region of the
phase plane when $x$ is very negative.

For large $x$, the choice of $u_0$ is less critical. As long as
$u_0'(x)\leq0$, it always lies in the subcritical region. We can
either choose $u_0$ vanishes for large $x$, or it decays fast as $x\to+\infty$.

Here is a subcritical initial condition
\begin{equation}\label{eq:subinit}
u_0 (x)=
  \begin{cases}
    1/x^2,   & x \in (-\infty, -3], \\
    (3x^5 + 35x^4 + 123x^3 + 81x^2 -162x +162)/1458,   & x \in (-3, 0], \\
    e^{-x}/9, &   x \in (0, \infty).
  \end{cases}
\end{equation}
The middle part is chosen as a polynomial which smoothly connects the
two functions, so that $u\in C^2(\R)$.

The contour plot of $(u_0'(x),u_0(x))$ is shown in Figure
\ref{fig:sub_threshold}, which indicates $u_0$ is subcritical.
Therefore, as a consequence of Theorem \ref{thm:main}, the solution
should be globally regular.

\begin{figure}[ht]
  \includegraphics[width=.8\linewidth]{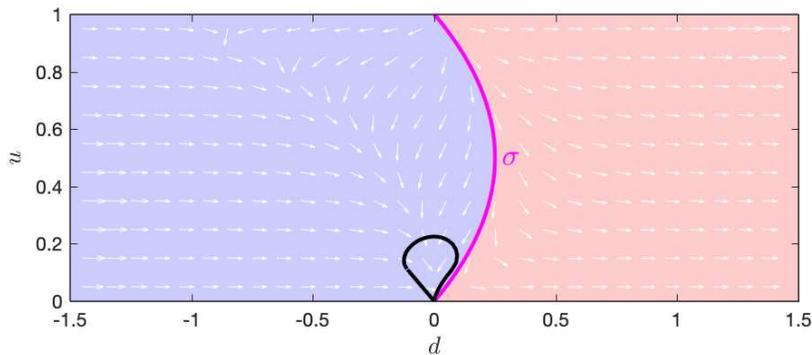}
  \caption{The contour plot of $(u'_0(x), u_0(x))$ in the phase
    plane where $u_0$ is \eqref{eq:subinit}.
    This initial condition lies in the subcritical region.}\label{fig:sub_threshold}
\end{figure}

Figure \ref{fig:sub_compare} shows the numerical results for all four
models with initial conditon \eqref{eq:subinit}. We observe that the
solution of our model \ding{174} indeed does not generate shocks. 

The wave speeds of the four models behave similar as the supercritical
case, due to \eqref{eq:kernelcomp}.
However, very interestingly, our model \ding{174}  is the \emph{only}
model where there is no  finite time wave break-down.
Indeed, we plot the quantity
$\|\pa_xu(\cdot,t)\|_{L^\infty}/\|u(\cdot,t)\|_{L^\infty}$ against
time $t$ in Figure \ref{fig:sub_blowup}. The quantity blows up in
finite time for models \ding{172}, \ding{173} and \ding{175}, but
remains bounded for our model \ding{174}. 
\begin{figure}[ht]
  \includegraphics[width=.8\linewidth]{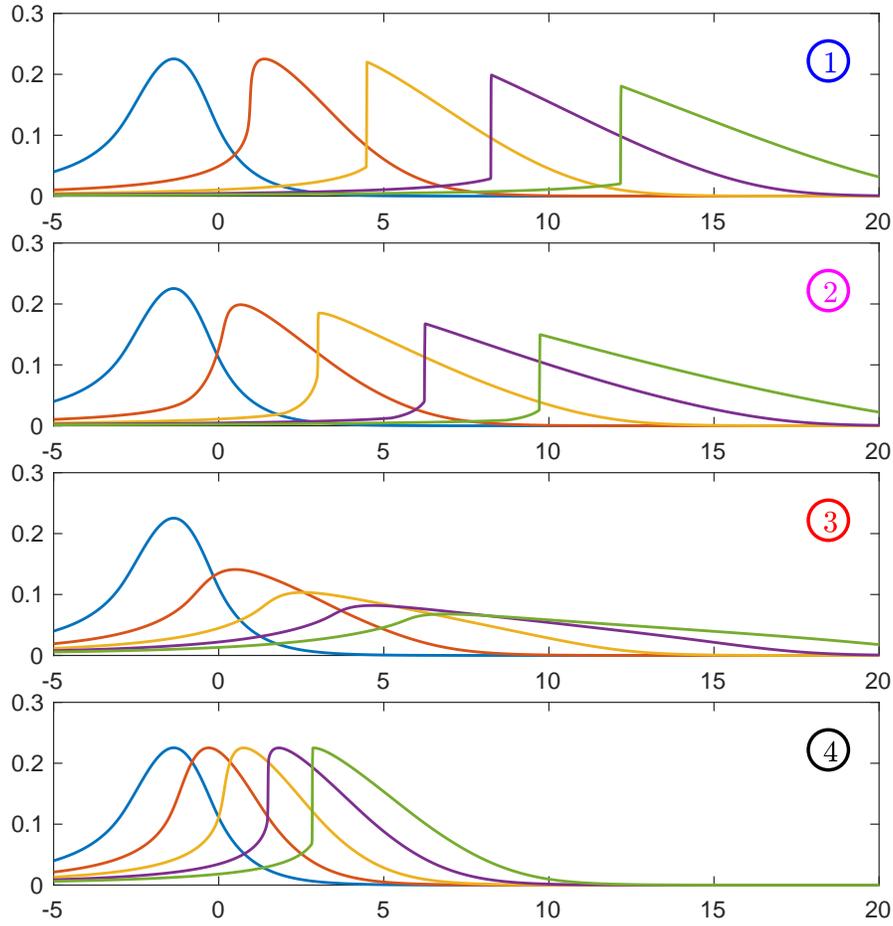}
  \caption{Snapshots of solutions for the dynamics for the four
    kernels, with subcritical
    initial condition \eqref{eq:subinit} at time $t=0,
    5, 10, 15, 20$.}\label{fig:sub_compare}
\end{figure}

\begin{figure}[h]
  \includegraphics[width=.9\linewidth]{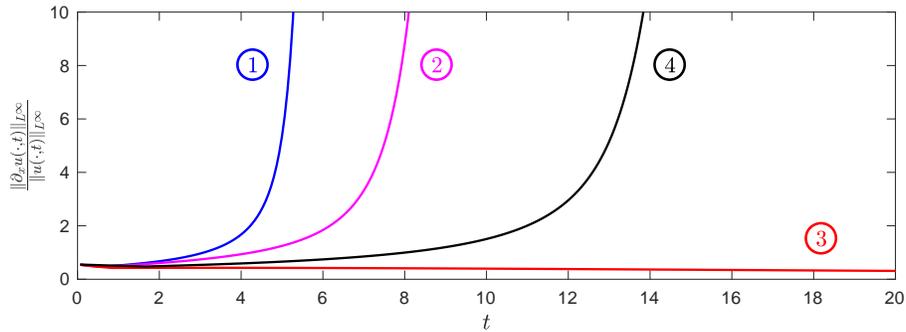}
  \caption{Numerical indicators of finite time blowup versus global
    regularity. With initial condition \eqref{eq:subinit}, only our
    kernel \ding{174} has a global smooth solution.}\label{fig:sub_blowup}
\end{figure}

\section{Further discussion}\label{sec:close}
We have shown a sharp critical threshold for our traffic
model \eqref{traffic_main} with look-ahead kernel \eqref{ourubar}. We
also compare our model with other classical kernels
\eqref{eq:kernels} through numerical simulations.
Our kernel has a unique feature that the solution
remains globally regular for initial conditions like \eqref{eq:subinit}.

To understand such behavior, we shall focus on the nonlocal slow down
factor $e^{-\ubar}$. From \eqref{eq:kernelcomp}, we observe that our
model has a factor which is neither the largest nor the
smallest. Hence, the size of the slow down factor does not matter.

An important feature of our model is that, the slow down factor is
monotone increasing. Indeed, we have
\[\pa_xe^{-\ubar}=u e^{-\ubar}>0,\quad\forall~x\text{~~s.t.~~}u(x)>0.\]
This implies that the front crowd does not slow down as much as the
back crowd. This could help avoid the shock formation, as observed in
the example.

For general nonlocal look-ahead kernel, it remains open whether there
are subcritical initial data which lead to global regularity.
If we consider a family of kernel $K_L$ in \eqref{scaling}, our result
indicates that subcritical initial data exist for $L=\infty$. On the
other hand, subcritical initial data does not exist for the LWR model,
where $L=0$. For $L\in(0,\infty)$, the problem is open.
A conjecture is, subcritical initial data exists for $L$ large
enough. This will be left for future investigation.

\newpage~\newpage

\begin{appendix}
  \section{Composition estimate}\label{sec:app}

In this section, we show the following estimate on the composition of
two functions. The estimate is useful to control the nonlocal weight
$e^{-\ubar}$ for our system.
  
\begin{theorem}\label{thm:composition}
  Let $s>0$. Suppose $g\in L^\infty\cap \dot{H}^s(\R)$ and
  $f\in C^{\lceil s \rceil}(\textnormal{Range}(g))$.
 Then, the composition
  $f\circ g \in L^\infty\cap \dot{H}^s(\R)$. Moreover, there exists a
  constant $C>0$, depending on $s, \|f\|_{C^{\lceil s \rceil}}$ and
  $\|g\|_{L^\infty}$, such that 
  \[\|f\circ g\|_{\dot{H}^s}\leq C\|g\|_{\dot{H}^s}.\]
\end{theorem}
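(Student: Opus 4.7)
The plan is to handle the $L^\infty$ bound trivially and then establish the $\dot{H}^s$ seminorm bound in two regimes depending on the size of $s$. Since $\|g\|_{L^\infty}<\infty$, the range of $g$ lies in a compact interval $I:=[-\|g\|_{L^\infty},\|g\|_{L^\infty}]$, and $\|f\|_{C^{\lceil s\rceil}(I)}$ is finite by hypothesis; all implicit constants below depend only on $s$, $\|f\|_{C^{\lceil s\rceil}(I)}$, and $\|g\|_{L^\infty}$, consistent with the statement. The pointwise bound $\|f\circ g\|_{L^\infty}\leq\|f\|_{L^\infty(I)}$ is immediate.

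For $0<s<1$, I would invoke the Gagliardo (double-integral) characterization
\[
\|h\|_{\dot{H}^s}^2\simeq\iint_{\R^2}\frac{|h(x)-h(y)|^2}{|x-y|^{1+2s}}\,dx\,dy.
\]
Applied to $h=f\circ g$ with the mean-value estimate $|f(g(x))-f(g(y))|\leq\|f'\|_{L^\infty(I)}|g(x)-g(y)|$, this immediately yields $\|f\circ g\|_{\dot{H}^s}\lesssim\|g\|_{\dot{H}^s}$.

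For $s\geq 1$, the plan is a Littlewood--Paley telescoping argument. Writing $\Delta_j$ and $S_j$ for the standard dyadic blocks and low-frequency truncations, the identity
\[
f\circ g=f(S_0 g)+\sum_{j\geq 0}\bigl[f(S_{j+1}g)-f(S_j g)\bigr]=f(S_0 g)+\sum_{j\geq 0}\Delta_j g\cdot m_j,
\]
with $m_j:=\int_0^1 f'\bigl(S_j g+\tau\Delta_j g\bigr)\,d\tau$, exhibits $f\circ g$ (modulo the low-frequency term, which is handled directly) as a sum of products of dyadic pieces of $g$ against bounded multipliers. Each $m_j$ is uniformly bounded by $\|f'\|_{L^\infty(I)}$, and differentiating under the integral yields Bernstein-type estimates $\|\partial_x^\ell m_j\|_{L^\infty}\lesssim 2^{j\ell}$ for $\ell\leq\lceil s\rceil-1$, since each derivative produces a factor $f^{(k)}$ evaluated on $I$ times a derivative of $S_j g$ or $\Delta_j g$ (controlled by Bernstein). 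Applying a second dyadic projection $\Delta_k$ and splitting the resulting double sum into the high-low interaction ($j\leq k$, bounded by Bernstein) and the high-high interaction ($j>k$, bounded via $L^2$ almost-orthogonality), standard paraproduct arguments then yield $\sum_k 2^{2ks}\|\Delta_k(f\circ g)\|_{L^2}^2\lesssim\|g\|_{\dot{H}^s}^2$.

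The principal obstacle is the bookkeeping in this last step when $s$ is large: differentiating $m_j$ produces a combinatorial sum of higher derivatives of $f$ evaluated on $I$ paired with various derivatives of $S_j g$ and $\Delta_j g$, and one must distribute at most $\lceil s\rceil$ derivatives across these factors while preserving a dyadically summable form. The hypothesis $f\in C^{\lceil s\rceil}(I)$ is exactly what is needed to absorb all the $f$-factors into a uniform constant, and the Bernstein gains $2^{j\ell}$ match the frequency gap between $\Delta_j g$ and the outer projection $\Delta_k$, allowing the paraproduct reassembly to close.
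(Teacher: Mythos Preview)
Your proposal is correct but follows a genuinely different route from the paper for $s\geq 1$. The paper works directly in real space: for integer $s$ it expands $\partial_x^s(f\circ g)$ via Fa\`a di Bruno, applies H\"older with exponents $p_r=2s/(r\alpha_r)$, and controls each factor $\|\partial_x^r g\|_{L^{2s/r}}$ by the Gagliardo--Nirenberg interpolation $\|\partial_x^r g\|_{L^{2s/r}}\lesssim\|\partial_x^s g\|_{L^2}^{r/s}\|g\|_{L^\infty}^{1-r/s}$; non-integer $s>1$ is then reduced to the integer case plus the fractional chain rule of Christ--Weinstein. Your Littlewood--Paley telescoping (Meyer's paralinearization) treats all $s\geq 1$ uniformly without the integer/fractional split, at the cost of importing dyadic machinery. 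The paper's argument is more elementary and makes the dependence on $\|f\|_{C^{\lceil s\rceil}}$ and $\|g\|_{L^\infty}$ completely explicit; yours is more systematic and extends more readily to other function spaces. For $0<s<1$ the two coincide in content: your Gagliardo-seminorm computation is exactly a proof of the fractional chain rule the paper cites.

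One small technical caveat in your sketch: the quantity $S_j g+\tau\Delta_j g$ need not take values in $I=\mathrm{Range}(g)$ (only in a fixed dilate of $I$, since $\|S_j g\|_{L^\infty},\|\Delta_j g\|_{L^\infty}\lesssim\|g\|_{L^\infty}$), so bounding $m_j$ by $\|f'\|_{L^\infty(I)}$ is not quite right as written. This is easily repaired by first extending $f$ to a $C^{\lceil s\rceil}$ function on $\R$ with comparable norm, which is elementary for functions on a compact interval; the paper's approach avoids this issue entirely since it only evaluates derivatives of $f$ at points $g(x)\in I$.
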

\begin{proof}
We first consider the case when $s$ is an integrer. Express
$\pa_x^s(f(g(x)))$ using Fa\`a di Bruno's formula

  \[\pa_x^s(f(g(x)))=\sum_{\alpha\in S_s} C_\alpha(x)\prod_{r=1}^s
   (\pa_x^rg(x))^{\alpha_r} .\]
where
  \[S_s=\left\{\alpha=(\alpha_1,\cdots, \alpha_s)~ :~
      a_k\in\mathbb{N},~~
      \sum_{r=1}^sr\alpha_r=s,~~\sum_{r=1}^s\alpha_r\leq s.\right\}.\]
  and
  \[C_\alpha(x)=s!\prod_{r=1}^s
    \left(\frac{1}{\alpha_r!\cdot
      (r!)^{\alpha_r}}\right) \pa_x^{\nu(\alpha)}f (g(x)),\quad \nu(\alpha)=\sum_{r=1}^s\alpha_r.\]
Then,
\[\|\pa_x^s(f\circ g)\|_{L^2}\leq\sum_{\alpha\in S_s}
  \|C_\alpha\|_{L^\infty}\left\|\prod_{r=1}^s(\pa_x^rg)^{\alpha_r}\right\|_{L^2}
  \lesssim \|f\|_{C^s}\sum_{\alpha\in S_s}\left\|\prod_{r=1}^s(\pa_x^rg)^{\alpha_r}\right\|_{L^2}. \]

Now, we estimate the last term. Applying H\"older's inequality, we get
\[\left\|\prod_{r=1}^s(\pa_x^rg)^{\alpha_r}\right\|_{L^2}\leq
\prod_{r=1}^s\|(\pa_x^rg)^{\alpha_r}\|_{L^{p_r}}=
\prod_{r=1}^s\|\pa_x^rg\|_{L^{\alpha_rp_r}}^{\alpha_r},\]
where $\{p_r\}_{r=1}^s$ is chosen as $p_r=\frac{2s}{r\alpha_r}$. So we have
\[\sum_{r=1}^s\frac{1}{p_r}=\frac{1}{2s}\sum_{r=1}^sr\alpha_r=\frac{1}{2}.\]
For each term $\|\pa_x^rg\|_{L^{\alpha_rp_r}}$, we apply
Gagliardo-Nirenberg-Sobolev interpolation inequality
\[\|\pa_x^rg\|_{L^{\alpha_rp_r}}=\|\pa_x^rg\|_{L^{\frac{2s}{r}}}\lesssim
  \|\pa_x^sg\|_{L^2}^{\frac{r}{s}}\|g\|_{L^\infty}^{1-\frac{r}{s}}.\]
Collecting all terms together, we obtain
\[\prod_{r=1}^s\|\pa_x^rg\|_{L^{\alpha_rp_r}}^{\alpha_r}
  \lesssim\|\pa_x^sg\|_{L^2}^{\sum_{r=1}^s\alpha_r\frac{r}{s}}
  \|g\|_{L^\infty}^{\sum_{r=1}^s\alpha_r(1-\frac{r}{s})}=
  \|\pa_x^sg\|_{L^2}\|g\|_{L^\infty}^{\nu(\alpha)-1}.\]
This concludes the proof.

Next, we discuss the case when $s$ is not an integer.
For $s\in(0,1)$, one can directly apply the chain rule for fractional
derivatives \cite[Proposition 3.1]{christ1991dispersion}
\[\|f\circ g\|_{\dot{H}^s}\leq C\|\pa_xf\|_{L^\infty}\|g\|_{\dot{H}^s}.\]
where $C$ is a constant depending on $s$ and $\|g\|_{L^\infty}$.

For $s>1$, we can combine the estimate for $\lfloor s\rfloor$ and the
fractional chain rule for $s-\lfloor s\rfloor$. The detail will be
left to the readers.
\end{proof}

\end{appendix}

\bibliographystyle{plain}
\bibliography{traffic}

\end{document}